\documentclass[11pt,final]{article}   
\usepackage{a4wide}
\usepackage{amsmath}
\usepackage{amssymb}
\usepackage{mathrsfs}
\usepackage{amsmath}
\usepackage{amssymb}
\usepackage{amsthm,xcolor}
\usepackage{enumerate,enumitem}
\usepackage[margin=3cm]{geometry}

\usepackage[colorlinks,linkcolor=black,citecolor=black]{hyperref} 
\usepackage[bordercolor=orange,backgroundcolor=orange!20,linecolor=orange,textsize=scriptsize,obeyFinal]{todonotes}
\usepackage[notcite,notref]{showkeys}  
\usepackage{cleveref}

\newtheorem{theorem}{Theorem}
\newtheorem{definition}[theorem]{Definition}

\newtheorem{observation}[theorem]{Observation}
\newtheorem{lemma}[theorem]{Lemma}

\newtheorem{conjecture}[theorem]{Conjecture}
\newtheorem{question}[theorem]{Question}

\def\subset{\subseteq}
\def\se{\subseteq}
\def\sm{\setminus}
\newcommand{\im}{\mathrm{Im\,}}
\newcommand{\dcup}{\dot\cup}


\usepackage[square,sort,comma,numbers]{natbib} 
\setlength{\bibsep}{0pt plus 0.1ex} 

\setlength{\parskip}{\medskipamount}
\setlength{\parindent}{0pt}

\usepackage{setspace}
\setstretch{1.2}

 
\setlist{itemsep=2pt,parsep=1pt,topsep=3pt,partopsep=0pt}  
\setenumerate{leftmargin=*,labelindent=\parindent} 

 
\def\itm#1{\rm ({#1})} 
\def\itmit#1{\itm{\it #1\,}} 
 
\def\abc{\itmit{\alph{*}}}
 
\def\itmarab#1{\mbox{\itm{{\it #1\,}\arabic{*}\hspace{.05em}}}}


\newcommand{\By}[2]{\overset{\mbox{\tiny{#1}}}{#2}}
\newcommand{\ByRef}[2]{   \By{\eqref{#1}}{#2} }

\newcommand{\eqByRef}[1]{ \ByRef{#1}{=} }

\newcommand{\geByRef}[1]{ \ByRef{#1}{\ge} }

\newcommand{\cB}{\ensuremath{\mathcal{B}}}

\newcommand{\cH}{\ensuremath{\mathcal{H}}}
\newcommand{\E}{\ensuremath{\mathbb{E}}} 	
\newcommand{\wv}{\ensuremath{\boldsymbol{w}}} 
\newcommand{\cE}{\mathcal{E}}
\newcommand{\eps}{\ensuremath{\varepsilon}} 


\author{Peter Allen\thanks{Department of Mathematics, London School of Economics, Houghton Street, London WC2A 2AE, UK. Email: {\tt p.d.allen@lse.ac.uk}} \and Julia B\"ottcher\thanks{Department of Mathematics, London School of Economics, Houghton Street, London WC2A 2AE, UK. Email: {\tt j.boettcher@lse.ac.uk}} \and 
Anita Liebenau\thanks{School of Mathematics and Statistics, UNSW Sydney, NSW 2052, Australia. Email: {\tt a.liebenau@unsw.edu.au.} Supported by the Australian research council.}}
\date{}

\begin{document}

\title{Universality for graphs of bounded degeneracy}

\maketitle

\begin{abstract}
Given a family $\cH$ of graphs, a graph $G$ is called $\cH$-universal if $G$ contains every  graph of $\cH$ as a subgraph. Following the extensive research on universal graphs of small size for bounded-degree graphs, Alon asked what is the minimum number of edges that a graph must have to be universal for the class of all $n$-vertex graphs that are $D$-degenerate. In this paper, we answer this question up to a factor that is polylogarithmic in $n.$
\end{abstract}

\section{Introduction}
Given a family $\cH$ of graphs, a graph $G$ is called {\em $\cH$-universal} if $G$ contains a copy of $H$ as a subgraph for every $H\in \cH.$ Rado~\cite{rado1964} first investigated universal graphs for infinite graphs. Since then, a lot of research has focused on finding sparse graphs that are universal, for various finite classes of graphs. Constructions of universal graphs with few edges have practical applications, for example, in space-efficient VLSI circuit design~\cite{valiant1981}, in data representation~\cite{crs1983,rss1980}, and in parallel computing~\cite{bl1982,bclr1986}. 

Specific classes of $\cH$ for which $\min \{ e(G) \mid G \text{ is } \cH\text{-universal}\}$ was studied include the class of all graphs with exactly $m$ edges~\cite{bcegs1982,aa2002,bds2021}, 
forests~\cite{cg1978,cg1983,cgp1976,fp1987,bclr1989}, planar graphs~\cite{bcegs1982,bclr1989,c2002}, and graphs of bounded maximum degree~\cite{aa2002,ck1999,ac2007,ac2008,ackrrs2000,ackrrs2001}. Let $\cH_{\Delta}(n)$ denote the family of all graphs on $n$ vertices that have maximum degree at most $\Delta$. 
Building on earlier work, Alon and Capalbo~\cite{ac2008} constructed an $\cH_{\Delta}(n)$-universal graph with $O(n^{2-{2/\Delta}})$ edges, which is tight up to the multiplicative constant by a counting argument due to Alon, Capalbo, Kohayakawa, R\"odl, Ruci\'nski and Szemer\'edi~\cite{ackrrs2000}.

We mention here that most of the constructions in the above references, including~\cite{ac2008}, are explicit. In an earlier paper, however, Alon, Capalbo, Kohayakawa, R\"odl, Ruci\'nski and Szemer\'edi~\cite{ackrrs2000}  showed that the binomial random graph $G(n,p)$ is asymptotically almost surely (a.a.s) $\cH_{\Delta}((1-\eps)n)$-universal (also called {\em almost-spanning} universal), when the edge probability $p$ is at least of order $(\log n/n)^{1/\Delta}.$ 
Since then, the problem of finding a threshold function for the random graph $G(n,p)$ to be $\cH_{\Delta}((1-\eps)n)$-universal or $\cH_{\Delta}(n)$-universal (i.e.~{\em spanning universal}), for given $\Delta$, has received a lot of attention and is still widely open. 
A threshold is at least of order $n^{2/(\Delta +1)}$ for almost-spanning universality, see e.g.~\cite{cfns2017}, and for spanning universality at least of order $n^{2/(\Delta +1)}(\log n)^{1/\binom{\Delta+1}{2}}$, due to the celebrated Johannson-Kahn-Vu theorem~\cite{jkv2008} on the threshold probability for clique factors. 
For general $\Delta\ge 3$ the best known upper bounds are 
$O(n^{1/(\Delta-1)}\log^5 n)$ for almost-spanning universality due to Conlon, Ferber, Nenadov and \v{S}kori\'c~\cite{cfns2017};  
and $O (n^{-1/(\Delta-0.5)}\log^3n)$ for spanning universality due to Ferber and Nenadov~\cite{fn2018}, beating the long standing barrier of $(\log n/n)^{1/\Delta}.$ 
Note that the result in~\cite{cfns2017} is tight for $\Delta=3$ up to the polylogarithmic term. 
In a recent breakthrough, Ferber, Kronenberg and Luh~\cite{fkl2019} proved that the Johannson-Kahn-Vu lower bound yields the correct order of magnitude for $G(n,p)$ to be (spanning) $\cH_2(n)$-universal. A folklore conjecture, stated explicitly in~\cite{fkl2019}, says that this ought to be true for all $\Delta\ge 3.$

Interestingly, some of these proofs actually give a better bound in terms of the degeneracy (when the maximum degree is still bounded but possibly much larger). 
A graph $H$ is said to have degeneracy $D$ if every induced subgraph of $H$ has a vertex of degree at most $D$. Equivalently, $H$ is $D$-degenerate if there is an ordering $v_1,\ldots,v_n$ of the vertices of $H$ such that $v_i$ has at most $D$ neighbours in $H$ among the vertices $\{v_1,\ldots,v_{i-1}\}.$  We denote by $\cH(n,D)$ the family of all $n$-vertex graphs of degeneracy at most $D$, and by $\cH_{\Delta}(n,D)$ the subfamily of graphs that additionally have maximum degree at most $\Delta.$ 

Ferber and Nenadov~\cite{fn2018} prove, as a simple example of their new ideas, that $G(n,p)$ is a.a.s.~$\cH_{\Delta}(n,D)$-universal for $p\ge (n^{-1}\log^3 n)^{1/2D}$, i.e., contains every $n$-vertex $D$-degenerate graph of maximum degree $\Delta$. That is, for graphs of  degeneracy much smaller than the maximum degree, the exponent $1/2D$ is much better than the general bound. An even simpler proof for  almost-spanning universality is included by Nenadov in his thesis~\cite{rajko-thesis}, where he proves that for some $p = O(\log^2n/(n\log\log n))^{1/D}$, the random graph $G(n,p)$ is $\cH_{\Delta}((1-\eps)n,D)$-universal.

The following question of Alon~\cite{a-personal} asks for universal graphs for graphs of bounded degeneracy, but arbitrarily large maximum degree.  
\begin{question}[Alon]\label{q:Alon}
What is $\min\{e(H): H \textit{ is universal for  } \mathcal{H}(n,D) \}$?
\end{question}

Observe that $G(n,p)$ is not a suitable candidate for such a universal graph as its maximum degree is only $O(pn)$. But, as we shall show, a random block model does work.

In this paper, we answer Question~\ref{q:Alon} up to a poly-logarithmic factor. First, we adapt the lower bound proof of Alon, Capalbo, Kohayakawa, R\"odl, Ruci\'nski and Szemer\'edi~\cite{ackrrs2000} to a similar argument for $D$-degenerate graphs. 
\begin{theorem}\label{thm:lowDens}
Given $D\ge1$, suppose that $n$ is sufficiently large and that the graph $\Gamma$ contains all $D$-degenerate graphs on $n$ vertices with maximum degree at most $2D+1$. Then $e(\Gamma)\ge\tfrac1{1000D}n^{2-1/D}$.
\end{theorem}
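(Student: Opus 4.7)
The plan is to adapt the counting-based lower-bound argument of Alon, Capalbo, Kohayakawa, R\"odl, Ruci\'nski and Szemer\'edi~\cite{ackrrs2000}, which shows that any $\mathcal{H}_\Delta(n)$-universal graph has $\Omega(n^{2-2/\Delta})$ edges. Applied naively with $\Delta = 2D+1$ this would already give an exponent slightly better than $2-1/D$, but it requires a rich family of max-degree-$(2D+1)$ graphs; since we are restricted to $D$-degenerate ones, the first task is to produce a large family of such graphs with maximum degree close to $2D+1$.

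I would obtain such a family $\mathcal{F}$ by the following random construction: fix an ordering $v_1,\ldots,v_n$ of $[n]$ and, independently for each $i > D$, choose $v_i$'s back-neighbourhood to be a uniformly random $D$-subset of $\{v_1,\ldots,v_{i-1}\}$. Every graph produced this way is automatically $D$-degenerate with $\sim Dn$ edges, and a Chernoff-type concentration shows that with probability at least $1/2$ the forward-degree of every vertex is at most $D+1$, so that $\Delta(H)\le 2D+1$. This gives $|\mathcal{F}|\ge \tfrac{1}{2}\prod_{i=D+1}^{n}\binom{i-1}{D} = \exp\bigl(\Omega(Dn\log n)\bigr)$.

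The heart of the argument is a first-moment computation: since every $H\in\mathcal{F}$ embeds in $\Gamma$, a uniformly random $H$ satisfies $\E[\#\mathrm{Emb}(H,\Gamma)]\ge 1$, and for each injection $\phi : V(H)\to V(\Gamma)$,
\begin{equation*}
  \Pr[\phi \text{ is an embedding of random } H]
  \;=\; \prod_{i>D}\frac{\binom{\bigl|N_\Gamma(\phi(v_i))\cap \phi(\{v_1,\ldots,v_{i-1}\})\bigr|}{D}}{\binom{i-1}{D}}.
\end{equation*}
Applying $\binom{d}{D}\le d^D/D!$ vertex by vertex, summing in a suitably chosen order and using a Maclaurin-type inequality on the elementary symmetric polynomials of $(d_\Gamma(v)^D)_{v}$, $\E[\#\mathrm{Emb}]\ge 1$ reduces to an inequality of the form $\Delta(\Gamma)^{D-1}\cdot e(\Gamma) \ge c_D\, n^{D+1}/N$, where $N=|V(\Gamma)|$. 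Combined with a cleaning step that reduces to the case $\Delta(\Gamma) = O(n^{1-1/D})$, this rearranges to $e(\Gamma)\ge \tfrac{1}{1000D}\, n^{2-1/D}$, with the constant tracked through the calculation.

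The main obstacle is the cleaning step to bound $\Delta(\Gamma)$: every vertex of any $H\in\mathcal{F}$ has degree at most $2D+1$, so heuristically each image vertex in $\Gamma$ uses only $O(D)$ of its edges per embedding; however, different members of $\mathcal{F}$ may require different edge-subsets at a given high-degree vertex, so it is not immediate that one can delete or redistribute edges without destroying universality. Absent such a reduction, an alternative is a finer first-moment calculation using a symmetrised random model in which both back- and forward-degrees of $H$ contribute multiplicatively to the embedding probability, which pushes the naive exponent $2-2/D$ up to the target $2-1/D$.
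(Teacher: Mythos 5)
There is a genuine gap, and in fact two independent problems.

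First, the random construction of the family $\mathcal{F}$ is broken. If for each $i>D$ you choose $v_i$'s back-neighbourhood to be a uniformly random $D$-subset of $\{v_1,\ldots,v_{i-1}\}$, independently, then the forward-degree of $v_j$ is $\sum_{i>j}X_{i,j}$ with $\Pr[X_{i,j}=1]=D/(i-1)$, so its expectation is $\approx D\log(n/j)$. For small $j$ this is $\Theta(D\log n)\gg D+1$, and even for $j$ of order $n$ the forward-degree exceeds $D+1$ with constant probability; the probability that \emph{every} vertex has forward-degree at most $D+1$ is exponentially small in $n$, not at least $1/2$. So your family $\mathcal{F}$ is essentially empty, and with it the whole counting/first-moment baseline disappears. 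The paper sidesteps this by constructing its family greedily with a degree cap built in: at step $i$ it joins $v_i$ only to vertices whose current degree is at most $2D$, and shows via an edge count that at least a $\tfrac{1}{2D+1}$-fraction of $[i-1]$ is still available, giving $\gtrsim \binom{i/4D}{D}$ choices at step $i$. That gives a lower bound $\ge 100^{-Dn}D^{-2Dn}n^{Dn}$ on the number of admissible labelled graphs, all of maximum degree at most $2D+1$ by construction.

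Second, even granting a valid family, your route through $\E[\#\mathrm{Emb}(H,\Gamma)]\ge 1$ and a Maclaurin-type inequality requires the cleaning step bounding $\Delta(\Gamma)$, which you correctly flag as the main obstacle but do not close; it is not clear that one can prune high-degree vertices of $\Gamma$ without destroying universality, since different $H$ may need different edge-subsets there. The paper avoids this entirely with a pure double-counting argument: it upper-bounds the number of labelled $n$-vertex subgraphs of $\Gamma$ with at most $Dn$ edges by $\sum_{q\le Dn}\binom{e(\Gamma)}{q}n!\le 10^{Dn}\bigl(e(\Gamma)/(Dn)\bigr)^{Dn}n^n$, which depends only on $e(\Gamma)$ and not on $\Delta(\Gamma)$, and then compares this against the lower bound on the family size to read off $e(\Gamma)\ge\tfrac{1}{1000D}n^{2-1/D}$. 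If you replace your first-moment computation by this "count labelled subgraphs of $\Gamma$" step, and replace your random family by the paper's greedily constructed one, the argument goes through; as proposed, both steps fail.
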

This result shows that Nenadov's upper bound~\cite{rajko-thesis} on universality for $D$-degenerate graphs whose maximum degree is in addition bounded is tight up to a poly-logarithmic factor. This is complemented by our main theorem, which shows that $n^{2-1/D}$ is tight up to a poly-logarithmic factor even without a maximum degree restriction. 
\begin{theorem}\label{thm:Construction}
Given $D\ge1$ and $n$ sufficiently large. Then there exists a graph with at most
\[80000n^{2-1/D}(\log^{2/D} n)(\log\log n)^5\,.\]
 edges that is $\cH(n,D)$-universal. 
\end{theorem}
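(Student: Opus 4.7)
The plan is to construct $\Gamma$ as a random layered block graph whose block sizes mirror the degree profile of a $D$-degenerate graph. Fix $L=\lceil\log_2 n\rceil$ and take a disjoint union $V(\Gamma)=V_0\dcup V_1\dcup\cdots\dcup V_L$, where the size of block $V_\ell$ is proportional to $n\cdot 2^{-\ell}$ (up to a polylogarithmic slack factor, with $V_L$ a small ``root'' layer absorbing very-high-degree vertices). Between $V_i$ and $V_j$ include each edge independently with probability $p_{ij}:=\min\bigl(1,\,c\cdot 2^{\max(i,j)/D}\cdot n^{-1/D}\cdot(\log n)^{2/D}\bigr)$ for a suitable constant $c$. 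Then $\sum_{i\le j}|V_i||V_j|p_{ij}$ is a convergent geometric double sum in $(i,j)$, yielding expected edge count $O\bigl(n^{2-1/D}(\log n)^{2/D}\bigr)$ up to the further $(\log\log n)^5$ slack produced by the embedding argument below.

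Given $H\in\cH(n,D)$, fix a degeneracy ordering $v_1,\dots,v_n$ and bucket vertices by degree: $U_\ell:=\{v\in V(H):\deg_H(v)\in[2^\ell,2^{\ell+1})\}$. Since $e(H)\le Dn$, the handshake lemma gives $|U_\ell|\le 4Dn/2^\ell$, so each $V_\ell$ has room for $U_\ell$. Process the degeneracy ordering in consecutive batches of size $B=(\log n)^{O(1)}$. For each batch, build the auxiliary bipartite graph in which each $v\in U_\ell$ in the batch is joined to the set of vertices of $V_\ell\sm\im(\phi)$ that are $\Gamma$-neighbours of all already-embedded back-neighbours of $v$; extend $\phi$ by a perfect matching, found via Hall's theorem. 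The choice of $p_{ij}$ is calibrated so that the expected size of each such candidate set comfortably exceeds $B$, and standard pseudorandomness of $\Gamma$ (Chernoff concentration of common neighborhoods over all $\le D$-tuples of layer-indexed vertices, via a union bound) underwrites Hall's condition with high probability.

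The main obstacle is that for $v\in U_\ell$ whose $\le D$ back-neighbours all lie in low-degree buckets, the expected common neighborhood in $V_\ell$ drops to roughly $c^D(\log n)^2$: the cancellation $|V_\ell|\prod p_{\ell_k,\ell}\ge (c^D/2^\ell)\cdot 2^{\ell}\cdot(\log n)^2$ is tight when $\ell_k\le\ell$ for every $k$, so a naive vertex-by-vertex greedy fails and the polylogarithmic slack must be budgeted carefully. The resolution is twofold: (i) Hall's matching within a batch lets distinct batch vertices share overlapping candidate sets, and (ii) a portion of each $V_\ell$ is held back as an absorbing reservoir, untouched during the main phase, to be used to repair any vertex whose candidate set happens to be too small. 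Iterating this reservoir construction over $O(\log\log n)$ rescue phases, each shrinking the residual failure set while contributing a polylogarithmic slack factor to the block sizes and edge probabilities, produces the $(\log\log n)^5$ factor in the final edge count. The remaining verifications---Chernoff concentration for edge densities, union bounds over the polynomially-many relevant vertex tuples, and routine checks that batch vertices have the required candidate expansion---follow the template of Ferber--Nenadov~\cite{fn2018} and Nenadov~\cite{rajko-thesis}.
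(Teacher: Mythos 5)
Your high-level plan---a random block model whose layers are sized according to the degree profile, with edge densities calibrated so that vertices of each degree range can be embedded into the appropriate block---matches the skeleton of the paper's construction. But the embedding argument and the supporting pseudorandomness take a genuinely different route, and the differences are exactly where the difficulties lie; several of them appear to be gaps rather than simplifications.

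The most serious gap is in how you propose to cope with the fact (which you correctly identify) that when all back-neighbours of $x_i$ are low-degree the expected common neighbourhood in the target layer is only $\Theta((\log n)^2)$. You propose Hall matching within batches plus an absorbing reservoir with $O(\log\log n)$ rescue phases, but you do not say what pseudorandomness statement would actually underwrite Hall's condition. If several batch vertices have \emph{identical} back-neighbourhoods (which is perfectly possible in a $D$-degenerate graph: a single $D$-set can be the back-neighbourhood of up to $\min_k\Delta_k$ later vertices), their candidate sets coincide exactly, and Hall's condition must then be certified on the union of many highly correlated candidate sets; ``Chernoff concentration of common neighbourhoods over $\le D$-tuples'' is a first-moment statement and does not address this. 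The paper sidesteps this entirely: it uses a plain vertex-by-vertex greedy that cascades across $\log n$ sub-blocks $W_{k,1},\dots,W_{k,\log n}$ of each block, and the quantity it concentrates (in \Cref{lem:detprop}) is not ``the size of a common neighbourhood'' but ``the number of vertices $u$ in a sub-block covered by at least one $B$ in a well-behaved multiset $\cB$.'' The indicators over distinct $u$ are \emph{independent} (they depend on disjoint edge sets), so Chernoff applies cleanly; the per-$u$ success probability is bounded below by a second-moment/Paley--Zygmund argument (\Cref{lem:prob}), which is the actual workhorse. That Paley--Zygmund step is the missing ingredient in your plan: without it, you have no way to turn a small (polylogarithmic) expectation into a statement that a positive fraction of a sub-block is usable.

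Two secondary issues. First, you use $L\approx\log_2 n$ dyadic layers, whereas the paper uses $N\approx\log\log n$ layers with breakpoints $\Delta_k=n^{D^{1-k}}$. The number of layers enters multiplicatively in several places (the union bound over intersection patterns in the paper's \Cref{lem:prob} costs $(N+1)^D$, and the number of sub-block/layer pairs appears in the union bound in \Cref{lem:detprop}); with $\log n$ layers this would contribute $(\log n)^{\Theta(D)}$ to the slack, blowing past the $(\log^{2/D}n)(\log\log n)^5$ budget. Second, the accounting for the $(\log\log n)^5$ factor via ``$O(\log\log n)$ rescue phases, each contributing a polylogarithmic slack factor to block sizes and edge probabilities'' cannot give $(\log\log n)^5$: multiplicative polylog slack compounded over $\Theta(\log\log n)$ rounds gives $\mathrm{polylog}(n)^{\Theta(\log\log n)}$, which is far larger. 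In the paper the $(\log\log n)^5$ has a concrete origin: $(\log\log n)^3$ lives inside $p_{i,k}$ (needed so that the Chernoff exponent $\tfrac16t(\log\log n)^D$ beats the entropy $2tD\log n$ of the well-behaved multiset, after paying $(\log\log n)^D$ for the intersection-pattern union bound and keeping $(\log\log n)^D$ slack), and the remaining $(\log\log n)^2$ is just $N^2$ from summing edge counts over block pairs.

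In short: the construction you sketch is in the right family, but the engine you propose for the embedding (batch Hall plus absorption) is under-specified precisely where the paper's engine (greedy cascade plus Paley--Zygmund) does the real work, and the layer count and slack budget as stated would not land on the claimed bound.
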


\section{Proofs}

We first prove \Cref{thm:lowDens} which, similarly to the lower bound of $n^{2-2/\Delta}$ for $\cH_{\Delta}(n)$-universal graphs in~\cite{ackrrs2000}, follows from a counting argument. 

\begin{proof}[Proof of \Cref{thm:lowDens}]
 Observe that since $\Gamma$ contains all graphs of maximum degree $D$, it has $\Omega\big(n^{2-1/D}\big)>3Dn$ edges by~\cite{ackrrs2000}.

 We first count connected graphs on $[n]$ with maximum degree at most $2D+1$ such that the natural order on $[n]$ is a $D$-degeneracy order. We can construct any such graph as follows: for each $2\le i\le n$ in succession, we pick between $1$ and $D$ vertices coming before $i$ whose degree is currently $2D$ or smaller, and join $i$ to these vertices. Note that since the vertex $i-1$ has at most $D$ neighbours, there is always at least one vertex to choose.
 
 Consider the choices for neighbours of vertex $i$ in this process. We have at most $D(i-1)$ edges within $[i-1]$. Letting $s$ be the number of vertices of degree $2D+1$ at this point, we see that
 \[\tfrac12(2D+1)s\le D(i-1)\quad\text{and so}\quad s\le \tfrac{2D}{2D+1}(i-1)\,.\]
 In particular, the number of vertices with $2D$ or fewer neighbours in $[i-1]$ is at least $\tfrac{1}{2D+1}(i-1)$, which is at least $\tfrac1{4D}i$ when $n$ is sufficiently large and $i\ge\sqrt{n}$. Thus, in this case, the number of ways to choose edges at $i$ is at least $\binom{i/4D}{D}\ge 10^{-D}D^{-2D}i^D$. Multiplying, the total number of choices in this process is at least
 \[\prod_{i=\sqrt{n}}^n10^{-D}D^{-2D}i^D\ge 100^{-Dn}D^{-2Dn}n^{Dn}\,.\] 
 This is therefore a lower bound on the number of connected graphs on $[n]$ with maximum degree $2D+1$ such that the natural order is a $D$-degeneracy order.
 
 We now consider how many such graphs can appear in $\Gamma$. For any given $n-1\le q\le Dn$, we use the following procedure. We pick some $q$ edges of $\Gamma$. If these edges span exactly $n$ vertices, we pick a labelling of the $n$ vertices with $[n]$.
 
 Since $e(\Gamma)>3Dn$, the number of graphs on $[n]$ we obtain like this is at most
 \[\sum_{q=n-1}^{Dn}\binom{e(\Gamma)}{q}n!\le 2\binom{e(\Gamma)}{Dn}n!\le 10^{Dn}\Big(\tfrac{e(\Gamma)}{Dn}\Big)^{Dn}n^n\,.\]
 
 If $\Gamma$ contains all connected $n$-vertex $D$-degenerate graphs with maximum degree at most $2D+1$, then in particular all the graphs on $[n]$ we constructed are obtained by this procedure, so we have
 \[100^{-Dn}D^{-2Dn}n^{Dn}\le 10^{Dn}\Big(\tfrac{e(\Gamma)}{Dn}\Big)^{Dn}n^n\]
 and hence
 \[e(\Gamma)^{D}\ge 100^{-D}D^{-D}n^{2D-1}10^{-D}=(1000D)^{-D} n^{2D-1}\,, \]
 from which the theorem follows. 
\end{proof}

We now turn to the proof of \Cref{thm:Construction}. We will first explain
the randomised construction we use for our universal graph, then provide the details of our
embedding strategy and formulate a pseudo-randomness property our construction
has, which we can then use to prove that our strategy works. We shall apply
the following Chernoff bound.

\begin{theorem}[Chernoff bound~{\cite[Theorem~2.1]{JLR}}]\label{thm:Chernoff}
  Let $X$ be a binomial random variable. Then for $\delta\in (0,3/2),$ we have 
  $\Pr( |X-\E X| > \delta\;\E X) < 2 e^{-\delta^2\E X/3}. $
\end{theorem}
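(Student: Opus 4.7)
The plan is to apply the classical Chernoff--Cram\'er exponential-moment method. Writing $X=\sum_{i=1}^N X_i$ as a sum of independent Bernoulli$(p_i)$ indicators with mean $\mu:=\E X=\sum_i p_i$, Markov's inequality applied to $e^{tX}$ gives $\Pr(X\ge a)\le e^{-ta}\,\E e^{tX}$ for every $t>0$; while for $t<0$ the event $\{X\le a\}$ is the same as $\{e^{tX}\ge e^{ta}\}$ and Markov yields $\Pr(X\le a)\le e^{-ta}\,\E e^{tX}$. The entire task then reduces to controlling the moment generating function and optimising $t$.

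First, by independence $\E e^{tX}=\prod_i\bigl(1+p_i(e^t-1)\bigr)$, and the elementary inequality $1+x\le e^x$ gives the clean bound $\E e^{tX}\le\exp\bigl(\mu(e^t-1)\bigr)$. For the upper tail $\Pr(X\ge(1+\delta)\mu)$ I would take the optimiser $t=\ln(1+\delta)>0$, which yields
\[\Pr\bigl(X\ge(1+\delta)\mu\bigr)\le\exp\Bigl(-\mu\bigl((1+\delta)\ln(1+\delta)-\delta\bigr)\Bigr).\]
For the lower tail I would use the analogous negative optimiser $t=\ln(1-\delta)<0$, valid for $\delta\in(0,1)$, obtaining
\[\Pr\bigl(X\le(1-\delta)\mu\bigr)\le\exp\Bigl(-\mu\bigl(\delta+(1-\delta)\ln(1-\delta)\bigr)\Bigr);\]
for $\delta\ge 1$ the lower tail is trivially zero since $X\ge 0$.

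The remaining step, and the only non-mechanical one, is an analytic verification of the two inequalities
\[\varphi_+(\delta):=(1+\delta)\ln(1+\delta)-\delta\ge\tfrac{\delta^2}{3}\text{ for }\delta\in(0,\tfrac32),\qquad \varphi_-(\delta):=\delta+(1-\delta)\ln(1-\delta)\ge\tfrac{\delta^2}{2}\text{ for }\delta\in(0,1).\]
Each is handled by setting $h(\delta)$ equal to the left side minus the right side, checking $h(0)=h'(0)=0$, and inspecting $h''$. The lower-tail inequality is immediate: $h_-''(\delta)=\delta/(1-\delta)\ge 0$, so $h_-$ is convex on $(0,1)$ and non-negative. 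The upper-tail inequality is the more delicate one: $h_+''(\delta)=\tfrac{1}{1+\delta}-\tfrac{2}{3}$ changes sign at $\delta=1/2$, so $h_+$ is first increasing and then decreasing on $(0,\tfrac32)$, and all that is needed is the endpoint check $h_+(\tfrac32)=\tfrac{5}{2}\ln\tfrac{5}{2}-\tfrac{9}{4}>0$. This endpoint is precisely where the hypothesis $\delta<3/2$ is used, and is the only genuine obstacle in the proof: the constant $\delta^2/3$ would simply fail for larger $\delta$. Finally a union bound over the two one-sided events supplies the factor $2$ and produces the stated conclusion.
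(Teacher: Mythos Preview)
The paper does not prove this statement at all: it is quoted as a black box from the reference~\cite{JLR} and used as a tool. Your argument is the standard Cram\'er--Chernoff derivation and is correct; the only minor slip is the sentence ``$h_+''$ changes sign at $\delta=1/2$, so $h_+$ is first increasing and then decreasing'', which skips a step---what follows directly is that $h_+'$ is unimodal, and one must also note $h_+'(0)=0$ and $h_+'(3/2)=\ln\tfrac52-1<0$ to conclude that $h_+'$ has a unique zero in $(0,\tfrac32)$ and hence that $h_+$ itself is unimodal. With that clarification the endpoint check $h_+(3/2)>0$ indeed finishes the upper-tail inequality, and the rest of your argument is clean.
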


We aim to construct a graph $\Gamma$ with the desired number of edges that
contains every $D$-degenerate graph $G$ on $n$ vertices. Clearly, a
$D$-degenerate graph on $n$ vertices may contain vertices of degree up to $n-1$,
and hence taking $\Gamma$ to be a standard binomial random graph will not work:
If we choose the edge density~$p$ appropriately for the number of desired edges
in~$\Gamma$, we will asymptotically almost surely not be able to embed vertices
of degree larger than $2np = o(n)$. However, by counting edges, we easily
observe that a $D$-degenerate graph cannot contain too many vertices of large
degree.

\begin{observation}\label{degree-observation}
  If $G$ is a $D$-degenerate graph then the number of vertices in $G$ of degree
  at least $k$ is at most $2Dn/k.$
\end{observation}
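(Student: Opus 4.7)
The plan is to use the standard fact that a $D$-degenerate graph on $n$ vertices has at most $Dn$ edges, and then apply a simple averaging argument on the degree sequence.

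First I would justify the edge bound: if $v_1,\dots,v_n$ is a $D$-degeneracy ordering of~$G$, then each vertex $v_i$ has at most $D$ neighbours among $\{v_1,\dots,v_{i-1}\}$, so summing over~$i$ gives $e(G)\le Dn$. Consequently, the sum of degrees satisfies
\[\sum_{v\in V(G)}\deg_G(v)=2e(G)\le 2Dn\,.\]

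Next, let $s$ denote the number of vertices of $G$ with degree at least~$k$. Since each such vertex contributes at least $k$ to the degree sum, we obtain $sk\le 2Dn$, and hence $s\le 2Dn/k$, which is exactly the claimed bound.

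This argument is entirely routine and there is no real obstacle; the only thing to be careful about is that the bound $e(G)\le Dn$ holds for every $D$-degenerate graph (which follows immediately from the definition via any degeneracy ordering, without any additional hypothesis on maximum degree or connectivity).
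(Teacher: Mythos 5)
Your proof is correct, and since the paper states this as an Observation with no proof attached (treating it as immediate), there is nothing to compare against; your argument — bounding $e(G)\le Dn$ via a degeneracy ordering, then applying Markov's inequality to the degree sum — is exactly the standard, intended justification.
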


With this in mind, the vertex set of our host graph $\Gamma$ will consist of
pairwise disjoint sets $W_{1},\ldots, W_{N}$, where for $1\le i\le N$ the host
set $W_i$ will be used for embedding vertices of degree between $n^{D^{-i}}$ and
$n^{D^{1-i}}$, and hence $W_{i}$ has size $\Theta(n^{1-D^{-i}})$. We then add
random edges between and within these sets with edge probabilities chosen so
that we can embed vertices of the desired degrees. Here, the parameters are
chosen so that we also obtain the correct overall number of edges (as we show
below). The following definition formalises this construction.

\begin{definition}[random block model]\label{construction}
  Given~$n$ and~$D$, let $N$ be the smallest integer such that
  \begin{equation}\label{eq:N}
    n^{D^{1-N}}\le
    3^{D^2}\,.
  \end{equation}
  For each $1\le i,k\le N$, let
  \begin{equation}\label{eq:pDelta}
    p_{i,k}=\min\big\{n^{-D^{-1}+D^{-i}+D^{-k}}(\log^{2/D} n)(\log\log n)^3,1\big\}\,,
    \quad\text{and}\quad
    \Delta_i=n^{D^{1-i}}\,.
  \end{equation}
  The \emph{random block model} $\Gamma(n,D)$ then has vertex set
  $W=W_{1}\dcup\ldots\dcup W_{N}$, where the pairwise disjoint $W_k$ are
  called \emph{blocks} and are of size
  \[|W_k|=100\cdot 3^Dn^{1-D^{-k}}\,.\]
  The edge set of the
  random block model is obtained as follows.  For each pair of vertices $u,v$
  with $u\in W_i$ and $v\in W_k$ we let $uv$ be an edge independently with
  probability $p_{i,k}$.
  
  For our embedding strategy it turns out to be useful to further partition
  each block $W_k$ into a \emph{sub-block} $W_{k,1}$ of size $\tfrac12|W_k|$ and
  sub-blocks $W_{k,2},\dots,W_{k,\log n}$ all of size at least $\tfrac1{2\log
    n}|W_k|$.
\end{definition} 

Our model has the following easy properties; in particular $\Gamma(n,D)$ has the
correct number of edges for our purposes a.a.s.

\begin{lemma}[properties of the block model]\label{lem:blockmodel}
  For sufficiently large~$n$ and $D\ge 2$, for $N,\Delta_N$, the random block model $\Gamma(n,D)$ with probabilities $p_{i,k}$ and
  blocks $W_k$ as in \Cref{construction} the following hold.
  \begin{enumerate}[label=\abc]
  \item\label{lem:blockmodel:N} $\frac{\log\log n}{2\log D}\le N\le 2\log\log n$ and $3^D\le\Delta_N\le 3^{D^2}$,
  \item\label{lem:blockmodel:p} if $i=1$ or $k=1$ we have $p_{i,k}=1$, otherwise $p_{i,k}=n^{-D^{-1}+D^{-i}+D^{-k}}(\log^{2/D} n)(\log\log n)^3$,
  \item\label{lem:blockmodel:W} $100n\le |W_N|\le \frac{100}{3}\cdot 3^D n$,
  \item\label{lem:blockmodel:Gamma} $\Gamma(n,D)$ has at most $200\cdot 3^D n$ vertices, and a.a.s.~at most 
    \[10^5\cdot 3^{2D} n^{2-1/D}(\log^{2/D} n)(\log\log n)^5\]
    edges. 
  \end{enumerate}
  \end{lemma}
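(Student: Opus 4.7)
My plan is to dispatch (a), (b), (c) by direct arithmetic, and to do the real work in (d). For (a), I take logarithms of the defining inequality $n^{D^{1-N}}\le 3^{D^2}$, which gives $D^{N-1}\ge\log n/(D^2\log 3)$; combining with the failure of the same inequality at $N-1$, and absorbing bounded quantities into ``$n$ sufficiently large'', yields $\tfrac{\log\log n}{2\log D}\le N\le 2\log\log n$. For $\Delta_N=n^{D^{1-N}}$, the bound $\Delta_N\le 3^{D^2}$ is precisely the defining inequality, and minimality of $N$ gives $\Delta_{N-1}=\Delta_N^D>3^{D^2}$, so $\Delta_N>3^D$.

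For (b), I look at the sign of the exponent $-D^{-1}+D^{-i}+D^{-k}$: when $i=1$ or $k=1$, the exponent equals $D^{-k}\ge 0$ (resp.\ $D^{-i}\ge 0$), so the expression inside the $\min$ is $\ge 1$ and hence $p_{i,k}=1$; when $i,k\ge 2$ the exponent is at most $-D^{-1}+2D^{-2}\le 0$, and for $n$ large the polylog factors cannot outweigh a strictly negative power of $n$, so the $\min$ is achieved by the formula. For (c), rewrite $|W_N|=100\cdot 3^D\,n\cdot n^{-D^{-N}}=100\cdot 3^D\,n/\Delta_N^{1/D}$ and substitute the bounds $3\le\Delta_N^{1/D}\le 3^D$ from (a).

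The substantive step is (d). For the vertex bound, observe that $|W_k|/|W_N|=n^{D^{-N}-D^{-k}}\le 3^{1-D^{N-k}}$, using $n^{D^{-N}}\ge 3$ from (a), so
\[
\sum_{k=1}^N|W_k|\le|W_N|\sum_{j=0}^{N-1}3^{1-D^j}\le 2|W_N|,
\]
the sum converging by double-exponential decay uniformly in $D\ge 2$. Combined with (c) this gives at most $200\cdot 3^D n$ vertices. For the expected number of edges, the key calculation is the cancellation, valid for $i,k\ge 2$,
\[
|W_i||W_k|p_{i,k}=(100\cdot 3^D)^2\, n^{(1-D^{-i})+(1-D^{-k})-D^{-1}+D^{-i}+D^{-k}}(\log^{2/D}n)(\log\log n)^3=10^4\cdot 3^{2D}n^{2-1/D}(\log^{2/D}n)(\log\log n)^3,
\]
so every ``bulk'' pair contributes the same quantity, independent of $i,k$. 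The corner pairs with $i=1$ or $k=1$ contribute at most $|W_1||W_k|\le 10^4\cdot 3^{2D}n^{2-1/D}$ (since $|W_1|=100\cdot 3^D n^{1-1/D}$ and $n^{-D^{-k}}\le 1$), and the diagonal terms $\binom{|W_i|}{2}p_{i,i}$ are handled identically up to a factor $\tfrac{1}{2}$. Summing over the $\binom{N+1}{2}\le 4(\log\log n)^2$ unordered pairs (using (a)) bounds $\E e(\Gamma)$ by $4\cdot 10^4\cdot 3^{2D}n^{2-1/D}(\log^{2/D}n)(\log\log n)^5$. Each block-pair edge count is a binomial with mean far exceeding $\log n$, so \Cref{thm:Chernoff} with $\delta=1$ together with a union bound over the $O((\log\log n)^2)$ pairs shows that a.a.s.\ $e(\Gamma)\le 2\,\E e(\Gamma)$, which is at most the claimed $10^5\cdot 3^{2D}n^{2-1/D}(\log^{2/D}n)(\log\log n)^5$.

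The only technicality I foresee is the boundary case $D=2$, $i=k=2$ of (b), where the exponent vanishes and the formula inside the $\min$ slightly exceeds $1$: here the $\min$ caps $p_{i,k}$ at $1$, which only strengthens the edge-count bound and so does not affect (d).
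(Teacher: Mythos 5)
Your proof follows the same approach as the paper's: parts (a)--(c) are direct consequences of the definitions of $N$, $\Delta_N$, and $|W_k|$; the vertex count in (d) follows from the double-exponentially decaying ratios $|W_k|/|W_N|$; and the edge count follows from computing $\E|E(W_i,W_k)|$ (observing the exponent cancellation giving $n^{2-1/D}$ in the bulk), then applying the Chernoff bound from \Cref{thm:Chernoff} together with a union bound over the $O((\log\log n)^2)$ block pairs. The constants differ slightly from the paper's (the paper uses $\delta$ implicitly giving a factor $2$ and $N^2$ ordered pairs; you use $\delta=1$ and $\binom{N+1}{2}$ unordered pairs), but both land comfortably under $10^5\cdot 3^{2D}$.

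One point worth flagging: your closing remark about $D=2$, $i=k=2$ is a genuine observation, not just a ``technicality you foresee''. In that case the exponent $-D^{-1}+D^{-i}+D^{-k}$ vanishes, so $n^0(\log^{2/D}n)(\log\log n)^3>1$ and the $\min$ in~\eqref{eq:pDelta} returns $1$, whereas \Cref{lem:blockmodel}\ref{lem:blockmodel:p} as literally stated would give the value $\log n\,(\log\log n)^3>1$. So for $D=2$ item~\ref{lem:blockmodel:p} is not quite correct as written; the paper's proof (``straightforward implications of the definitions'') does not address this. You are right that the saturation at $1$ only helps the edge count in~\ref{lem:blockmodel:Gamma}, so (d) is unaffected; but it does mean (b) should carve out this case, and anything downstream that substitutes the closed-form expression for $p_{\tilde\ell,k}$ with $\tilde\ell=k=2$, $D=2$ (as in the final display of the proof of \Cref{lem:prob}) needs a separate check there. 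Catching this is a legitimate correction to the statement.
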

\begin{proof}  
  Properties~\ref{lem:blockmodel:N} and~\ref{lem:blockmodel:p} are
  straightforward implications of the definitions, where for the estimates on
  $\Delta_N$ we use $\Delta_N=n^{D^{1-N}}\le 3^{D^2}$ and
  $\Delta_N^D=n^{D^{1-(N-1)}}>3^{D^2}$ by the definitions of~$N$ and~$\Delta_N$.
  We obtain~\ref{lem:blockmodel:W} by observing that
  \[|W_N|=100\cdot 3^Dn^{1-D^{-N}}=100\cdot 3^Dn\frac{1}{(\Delta_N)^{1/D}}\]
  and plugging in $3^D\le\Delta_N\le 3^{D^2}$. The first part of~\ref{lem:blockmodel:Gamma}
  follows from
  \[\sum_{k=1}^N|W_k|=100\cdot 3^Dn\sum_{k=1}^N\Big(\frac{|W_N|}{100\cdot 3^Dn}\Big)^{D^k}\]
  and $|W_N|\le \frac{100}{3}\cdot 3^D n$. It remains to prove the second part of~\ref{lem:blockmodel:Gamma}, which
  is an easy application of a Chernoff bound.
  
  Indeed, for every pair $(i,k)\in[N]^2$ let $E(W_i,W_k)$ denote the edges
  between $W_i$ and $W_k$ in $\Gamma.$
  We have $\E |E(W_i,W_k)| = p_{i,k}|W_i||W_k|$ if $i\neq k$, and
  $\E |E(W_k,W_k)| = p_{k,k}
  \binom{|W_k|}{2}=(\frac38\pm\frac18)p_{k,k}|W_k||W_k|$.
  If $i,k\neq 1$ we have
  $p_{i,k}|W_i||W_k|=\log^{2/D}n(\log\log n)^3(100\cdot 3^D)^2n^{2-D^{-1}}$,
  and if $i=1$ (and similarly for $k=1$) we have
  $p_{i,k}|W_i||W_k|=(100\cdot 3^D)^2n^{2-D^{-1}-D^{-k}}$.
  We conclude that for each pair $(i,k)\in[N]^2$ and $n$ large enough,
  \begin{multline*}
    n^{2-2/D} \le \E |E(W_1,W_1)| \le \E |E(W_i,W_k)| \\
    \le p_{i,k}|W_i||W_k|\le 10^4\cdot3^{2D}n^{2-1/D}(\log^{2/D} n)(\log\log n)^3\,.
  \end{multline*}
  Thus, by the Chernoff bound in \Cref{thm:Chernoff} and the union bound over the $N^2\le (2\log\log
  n)^2$ pairs, with probability at most $2 N^2 e^{-(n^{2-2/D})/3} = o(1)$,
  the total number of edges in $\Gamma(n,D)$ exceeds
  \[2\cdot (2\log\log n)^2\cdot 10^4\cdot3^{2D}n^{2-1/D}(\log^{2/D} n)(\log\log n)^3,\]  
  and the claim follows. 
\end{proof}

In the proof of \Cref{thm:Construction} we will show that $\Gamma(n,D)$
a.a.s.~contains every $D$-degenerate graph on $n$ vertices.  For this we shall
use the following embedding strategy.

\begin{definition}[Embedding Strategy]\label{strategy}
  Given $\Gamma\sim\Gamma(n,D)$, fix a $D$-degenerate graph $H$, and suppose its
  vertices are $x_1,\dots,x_n$ in a $D$-degeneracy order. We embed $H$ into
  $\Gamma$ one vertex at a time, in order, as follows.  Let $\psi_0$ be the
  \emph{trivial partial embedding} of no vertices of $H$ into $\Gamma$. Now for
  each $1\le i\le n$ in succession, we construct a \emph{partial embedding}
  $\psi_i$ of $\{x_1,\dots,x_i\}$ into $\Gamma$ as follows. We let $k$ be such
  that $\Delta_{k+1}<\deg(x_i)\le\Delta_k$, where $\Delta_{N+1}=0$. Denote by
  $N^-(x_i)=\{y_1,\ldots,y_{\ell}\}$ the \emph{back-neighbours} of~$x_i$, that
  is, the neighbours of $x_i$ preceeding $x_i$ in~$H$.  We choose $j$ minimal
  such that the vertices $\psi_{i-1}(y_1),\dots,\psi_{i-1}(y_\ell)$ have at
  least one common neighbour $v$ in $W_{k,j}\setminus\im\psi_{i-1}$. We define
  $\psi_i=\psi_{i-1}\cup\{x_i\to v\}$. If this is not possible, we say $\psi_i$
  (and the subsequent partial embeddings) do not exist and that the embedding
  strategy \emph{fails}.
\end{definition}

Note that this embedding strategy maintains that each $\psi_i$ which exists is
injective.  In order to prove that this embedding strategy does not fail we
must ensure that there exists~$j$ such that the images of already embedded
back-neighbours $\psi_{i-1}(y_1),\dots,\psi_{i-1}(y_\ell)$ have sufficiently
many common neighbours in $W_{k,j}\setminus\im\psi_{i-1}$. For this it will be
useful if we can maintain that during our embedding no $W_{k,j}$ gets filled up
too much. The following definition makes this precise, where we collect (some
of) our sets of embedded back-neighbours in a multiset~$\mathcal{B}$. This
multiset then has to satisfy certain conditions (given in~\ref{eq:Nice1}
and~\ref{eq:Nice2}) by our degeneracy condition on~$H$ and the given embedding
strategy, and we would like that the multisets do not fill up any sub-block
(this is~\ref{eq:Nice3}).

\begin{definition}[Well-behaved collection of embedded back-neighbours]
\label{def:well}
  Let $\Gamma(n,D)$, the partitions $W = \bigcup_{k} W_k$, and $W_k =
  \bigcup_{j} W_{k,j}$, and $\Delta_k$ be as in \Cref{construction}.  For
  $1\le t \le n,$ let $\mathcal{B}$ be a multiset $\{B_i\}_{i=1}^t,$ where each
  $B_i\se W$. Then $\cB$ is called {\em well-behaved} if
  \begin{enumerate}[label=\itmarab{NB}]
  \item  \label{eq:Nice1}
    $|B_i| \le D$  for all $1\le i \le t$, 
  \item \label{eq:Nice2} 
    for all $1\le k\le N$ and for all $u\in W_k$ we have $\big|\{ i \in [t] : u \in B_i \} \big| \le \Delta_k$,  and 
  \item \label{eq:Nice3}
    for each $1\le k\le N$ and each $1\le j\le\log n$, we have $\big|\bigcup \mathcal{B}\cap W_{k,j}\big|\le\tfrac12|W_{k,j}|$.
  \end{enumerate} 
\end{definition} 

For a set $B\se V(G),$ we denote by $N_G(B)$ the \emph{common neighbourhood} of
$B$ in $G,$ and omit the subscript when the graph $G$ is clear from context. The
next lemma shows that when we have a well-behaved collection~$\cB$ of embedded back-neighbours
then for any fixed vertex $u$ in our random block model that is not occupied by a vertex
from the collection, with some reasonable probability there is some $B\in\cB$ that is
entirely in the neighbourhood of~$u$ (which means that $u$ can be used for
embedding a vertex with back-neighbourhood embedded to~$B$).  The heart of the
proof of this lemma is a second moment calculation, needed for the application of a
special case of the Paley--Zygmund inequality.
We remark that we do not need Property~\ref{eq:Nice3} from \Cref{def:well} in this proof.

\begin{lemma}\label{lem:prob}
  Let $D, n$ be non-negative integers such that $n$ is sufficiently large, and
  let $1\le t\le n$. Let $\Gamma \sim \Gamma(n,D)$ be an instance of the random
  block model with vertex set~$W$ and let $\mathcal{B}$ be a well-behaved
  multiset of~$t$ subsets of $W$.  Fix $1\le k\le N$ and any $u\in
  W_k\setminus\bigcup\mathcal{B}$, and let $\cE$ be the event that there
  exists $B\in\mathcal{B}$ such that $u\in N_{\Gamma}(B)$. Then
  \[\Pr(\cE)\ge\min\Big\{\frac14,\, tn^{D^{1-k}-1}(\log n)^2(\log\log n)^D\Big\}\,.\]
\end{lemma}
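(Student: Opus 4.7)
The plan is a Paley--Zygmund second moment argument. For each $v \in W\setminus\{u\}$ let $Z_v := \mathbf{1}[u\sim v]$; these are independent Bernoulli variables with parameters $p_v := p_{k,i(v)}$. For each $B \in \cB$ set $X_B := \prod_{v\in B} Z_v$ and $X := \sum_{B \in \cB} X_B$, so that $\cE = \{X\ge 1\}$ and Paley--Zygmund yields $\Pr(\cE) \ge (\E X)^2/\E X^2$.

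Two simplifications are useful first. If $k=1$, then by~\ref{lem:blockmodel:p} we have $p_{k,i}=1$ for all $i$, so $u$ is joined to every other vertex of $\Gamma$ and $\Pr(\cE)=1$. Hence assume $k\ge 2$. Moreover, for any $v \in W_1$ we have $p_v = p_{k,1} = 1$, so $Z_v=1$ deterministically; removing all $W_1$-vertices from each $B$ alters neither $X_B$ nor the validity of \ref{eq:Nice1} and~\ref{eq:Nice2}. We may therefore further assume $B\subseteq W_2\cup\dots\cup W_N$ for every $B\in\cB$.

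Writing $f(B) := \prod_{v\in B} p_v$, independence gives $\E X = \sum_B f(B)$. Using $|B|\le D$ from~\ref{eq:Nice1} together with $p_v\le 1$ and the fact that $p_{k,i}$ is non-increasing in $i$ (so $p_v\ge p_{k,N}$), each $f(B)\ge p_{k,N}^D$, and since $n^{D^{1-N}}\ge 1$,
\[
  \E X \ge t\cdot p_{k,N}^D = t\,n^{-1+D^{1-k}+D^{1-N}}\log^2 n\,(\log\log n)^{3D} \ge t\,n^{D^{1-k}-1}\log^2 n\,(\log\log n)^{3D}.
\]
For the second moment, independence of the $Z_v$ gives
\[
  \E X^2 \;=\; \sum_{B,B'} f(B\cup B') \;\le\; \E X + (\E X)^2 + S, \qquad S \;:=\; \sum_{\substack{B\ne B'\\ B\cap B'\ne\emptyset}} f(B\cup B').
\]
The main task is to bound $S \le C_D\bigl(\E X + (\E X)^2\bigr)$ for a constant $C_D$ depending only on $D$. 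I would group pairs by the common intersection $C = B\cap B'$: by~\ref{eq:Nice2} the number of $B\supseteq C$ is at most $\min_{v\in C}\Delta_{i(v)}$, and by~\ref{eq:Nice1} $|B\setminus C|\le D - |C|$. The crucial input is the arithmetic of the exponents in~\eqref{eq:pDelta}---the definition of $p_{k,i}=n^{-D^{-1}+D^{-i}+D^{-k}}\cdot\mathrm{polylog}$ is chosen precisely so that $p_{k,i}^{|B\setminus C|}$ balances against $\Delta_i^{|B\setminus C|}$, causing the per-block contributions to telescope to the required estimate.

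This second-moment estimate is the main obstacle: the crude bound $f(B'\setminus B)\le 1$ is far too wasteful when the intersection is supported in a low-index block $W_i$ with large $\Delta_i$, and the entire argument relies on the delicate cancellation baked into the choice of $p_{k,i}$ and $\Delta_i$. Once the bound on $S$ is obtained, Paley--Zygmund gives
\[
  \Pr(\cE) \;\ge\; \frac{(\E X)^2}{\E X^2} \;\ge\; \frac{\E X}{2C_D(1+\E X)} \;\ge\; \frac{1}{2C_D}\min\{1,\E X\}.
\]
Combined with the lower bound on $\E X$, the surplus factor $(\log\log n)^{2D}$ between $\E X$ and the target $t n^{D^{1-k}-1}\log^2 n\,(\log\log n)^D$ absorbs the constant $2C_D$ for $n$ sufficiently large, and the resulting bound is at least $\min\{\tfrac14,\,t n^{D^{1-k}-1}\log^2 n\,(\log\log n)^D\}$, as required.
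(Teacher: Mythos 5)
Your outline follows the same Paley--Zygmund skeleton as the paper, but the pivotal intermediate claim --- that the off-diagonal sum $S$ satisfies $S \le C_D\bigl(\E X + (\E X)^2\bigr)$ for a constant $C_D(D)$ --- is false, and the rest of the argument collapses once this is seen. Your claim would yield $\Pr(\cE)\ge\tfrac1{2C_D}\min\{1,\E X\}$, i.e.\ a bound proportional to $\E X$ when $\E X\le 1$. But the actual probability can be much smaller than $\E X$. Concretely, take $D=3$, $k=2$, suppose all $B\in\cB$ have intersection pattern $\wv_2=3$ (so $B\subseteq W_2$), and suppose they all share one common vertex $v_1\in W_2$, with $t\approx n^{1/3}=\Delta_2$ so that~\ref{eq:Nice2} is tight. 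Then $f(B\cup B')=p_{2,2}^5$ for $B\ne B'$ while $f(B)=p_{2,2}^3$, and with $p_{2,2}=n^{-1/9}\cdot\mathrm{polylog}$ one finds $S\approx t^2p_{2,2}^5\approx n^{1/9}\cdot\mathrm{polylog}$ while $\E X\approx tp_{2,2}^3\approx\mathrm{polylog}$ and $(\E X)^2\approx\mathrm{polylog}$. So $S/\bigl(\E X+(\E X)^2\bigr)\to\infty$ polynomially in $n$; no constant $C_D$ works.

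The point is that the second-moment term coming from intersecting pairs is genuinely dominant and cannot be absorbed into $(\E X)^2+\E X$. The paper's proof does not try to prove a bound of the form $S=O_D\bigl(\E X+(\E X)^2\bigr)$. Instead, after reducing (at a $(\log\log n)^D$ cost) to a single intersection pattern $\wv$, it writes $\E X^2\le 2\E X\cdot\max\bigl\{\E X,\ D\max_\ell\Delta_\ell\prod_{i>\ell}p_{i,k}^{\wv_i}\bigr\}$ and then \emph{keeps} the extra factor $\Delta_{\tilde\ell}\prod_{i>\tilde\ell}p_{i,k}^{\wv_i}$ in the denominator when computing $(\E X)^2/\E X^2$. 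The stated conclusion, $\Pr(\cE)\ge t n^{D^{1-k}-1}\cdot\mathrm{polylog}$, is then precisely of the order $\E X/\Delta_{\tilde\ell}$-type and can be, as in the example, polynomially smaller than $\E X$. So the "telescoping cancellation'' you invoke does not make the surplus term disappear; the arithmetic of $p_{i,k}$ and $\Delta_i$ only ensures that after dividing by $\Delta_{\tilde\ell}$ one still lands exactly on $n^{D^{1-k}-1}$. You need to carry the $\Delta_\ell$ term through to the final ratio rather than attempt to bound it by a constant multiple of $\E X+(\E X)^2$; and, to make the per-pattern computation clean, you should also do the reduction to a single intersection pattern, since otherwise $f(B)$ varies across $\cB$ and the homogeneous manipulations no longer go through verbatim.
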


\begin{proof}
  With each $B\in\cB$ we associate an \emph{intersection pattern}
  $\wv=\wv(B)\in\{0,\ldots,D\}^N$, where $\wv_k=|B\cap W_k|$.  Without loss of
  generality, we can assume $\wv(B)_1=0$ for each $B\in\cB$. Indeed, suppose
  that this special case of \Cref{lem:prob} holds. Given any multiset
  $\mathcal{B}$ that satisfies the assumption of the lemma, define
  $\mathcal{B}'$ by removing from each $B\in \mathcal{B}$ all elements in
  $W_1$. Then, since $p_{1,k}=1$, we have that $u\in N(B)$ for some
  $B\in\mathcal{B}$ if and only if $u\in N(B')$ for the corresponding
  $B'\in\mathcal{B}'$, so that the conclusion of the lemma for $\mathcal{B}$
  follows from that for $\mathcal{B}'$. We can similarly assume that $k\ge2$, since
  the desired probability in the case $k=1$ is equal to $1$ by the definition of
  $p_{i,1}$. Hence, we assume from now on that $\wv(B)_1=0$ and that $k\ge 2$.

  We next would like to argue that we can further restrict ourselves to the case
  that all $B\in \cB$ have the same intersection pattern $\wv$. More precisely,
  we claim that, if we can prove
  \begin{equation}
  \label{eq:prob:special}
    \Pr(\cE)\ge\min\Big\{\frac14\,,\frac{t\;  (\log n)^{2}}{2D} n^{D^{1-k}-1}(\log\log n)^{3D}\Big\}\,,
  \end{equation}
  in the case that all intersection patterns are the same, then we are
  done. Indeed, observe that, since each $B$ has size at most $D$, an
  intersection pattern is determined by a list of $D$ symbols that can either be
  from $[N]$ or a blank symbol, so that there are at most $(N+1)^D\le (2\log\log
  n)^D$ different intersection patterns, where we use
  \Cref{lem:blockmodel}\ref{lem:blockmodel:N}. Letting $\wv$ be the most common
  intersection pattern and restricting to the subcollection $\cB'\se\cB$ of at
  least $t(2\log\log n)^{-D}$ sets with intersection pattern $\wv$, we get
  from~\eqref{eq:prob:special} that the probability that $u\in N(B')$ for some
  $B'\in\mathcal{B'}$ is at least
  \[
  \min\Big\{\frac14,\frac{t\; (\log n)^{2}}{(2\log\log n)^D \cdot 2D} n^{D^{1-k}-1}(\log\log n)^{3D}\Big\}
  \ge \min\Big\{\frac14,tn^{D^{1-k}-1}(\log n)^2(\log\log n)^D\Big\}\,,
  \]
  as desired. So, we assume from now on also that all intersection patterns are $\wv$.
  
  For each $B\in \cB$, let $X_B$ denote the indicator random variable for the event $B\se N(u)$, and let $X=\sum_{B\in\cB}X_B.$ Then 
  \begin{equation}\label{eq:chebyshev} \Pr(\cE)= \Pr( X > 0) \ge \frac{(\E X)^2}{\E X^2},\end{equation}
  by Chebyshev's inequality. (This is also a special case of the Paley--Zygmund inequality.)
  We first note that 
  \begin{equation}\label{eq:fstMom}
    \E X = \sum_{B\in\cB} \E X_B = t \prod_{1\le i \le N} p_{i,k}^{\wv_i}.
  \end{equation}
  To bound the second moment from above we observe that  
  \begin{align}\label{eq:secMom1}
    \E X^2 &= \sum_{B,B'\in\cB} \Pr(X_B = 1, X_{B'}=1) 
      = \sum_{B\in\cB} \Pr\big(u\in N(B)\big)\cdot \sum_{B'\in\cB} \Pr \big(u\in N(B'\sm B)\big),
  \end{align}
  since the events $ u\in N(B)$ and $u\in N(B'\sm B)$ are independent for all
  $B,B'\in\cB$. Here, $\Pr \big(u\in N(\emptyset)\big) =1$ by convention.

  For $B,B'\in\cB,$ let $\ell = \ell (B,B')$ be the maximal index $j$ such
  that $B\cap B'\cap W_{j}\neq \emptyset,$ and set $\ell = 0$ if no such $j$
  exists. We now fix $B\in\mathcal{B}$ and find an upper bound on
  $\sum_{B'\in\cB} \Pr (u\in N(B'\sm B))$. We split this sum up according to
  $\ell(B,B')$. For $\ell=0$, we have
  \begin{equation}\label{eq:prob:ell0}
    \sum_{\substack{B'\in\cB \\ \ell(B,B')=0}}\Pr (u\in N(B'\sm B))=\sum_{\substack{B'\in\cB \\ \ell(B,B')=0}}\Pr \big(u\in N(B')\big)\le\E X\,.
  \end{equation}
  Observe that $\ell\neq 1$ since $\wv_1=0$, hence it remains to consider $\ell\ge 2$.
  In this case
  \[\Pr(u\in N(B'\sm B)) \le \prod_{i > \ell(B,B')} p_{i,k}^{\wv_i}\]
  for all $B'\in \cB,$ by using the trivial upper bound 1 for all elements in $(B'\sm B)\cap \bigcup_{i \le \ell(B,B')} W_i$.
  Since for every $\ell\in\{2,\ldots,N\},$ there are at most $\wv_{\ell}
  \Delta_\ell$ sets $B'\in\cB$ such that $B\cap B'\cap W_{\ell}\neq
  \emptyset$,
  we obtain 
  \begin{equation*}
    \sum_{\substack{B'\in\cB \\ \ell(B,B')>0}} \Pr (u\in N(B'\sm B)) 
    \le \sum_{\ell\in\{2,\ldots,N\}} \wv_{\ell} \Delta_\ell \prod_{i > \ell} p_{i, k}^{\wv_i}\,.
  \end{equation*}
  Putting this together with~\eqref{eq:prob:ell0}, we get
  \begin{align}\label{eq:secMom2}
    \sum_{\substack{B'\in\cB}} \Pr (u\in N(B'\sm B)) 
    &\le 2\max\Big\{\E X,\sum_{\ell\in\{2,\ldots,N\}} \wv_{\ell} \Delta_\ell \prod_{i > \ell} p_{i, k}^{\wv_i}\Big\}. 
  \end{align}
  Note that the right-hand side of~\eqref{eq:secMom2} does not depend on~$B$. We
  thus obtain from~\eqref{eq:secMom1} that
  \begin{align}
    \nonumber
    \E X^2 &\le 2\max\Big\{\E X,\sum_{\ell\in\{2,\ldots,N\}} \wv_{\ell} \Delta_\ell \prod_{i > \ell} p_{i, k}^{\wv_i}\Big\}\cdot \sum_{B\in\cB} \Pr(u\in N(B) )\\
    &\le \max\bigg\{2(\E X)^2,2\;\E X\cdot  D \max_\ell \Big\{\Delta_\ell \prod_{i > \ell} p_{i, k}^{\wv_i}\Big\}\bigg\}\,,\label{eq:secMom3}
  \end{align}
  where we use that $\sum_{\ell= 1}^N \wv_{\ell} \le D.$ Letting $\tilde{\ell}$
  be the index $\ell\in\{2,\dots,N\}$ maximising the expression $\Delta_\ell
  \prod_{i > \ell} p_{i, k}^{\wv_i},$ we obtain from~\eqref{eq:fstMom}
  and~\eqref{eq:secMom3} that
  \begin{align*}\label{eq:mostCommonPattern}
    \frac{(\E X)^2}{\E X^2} &\ge \min\Big\{\frac14,\frac{t  \prod_{1\le i \le \tilde{\ell} } p_{i,k}^{\wv_i}}{2D \Delta_{\tilde{\ell}}}	\Big\}
    \ge \min\Big\{\frac14,\frac{t \; p_{{\tilde \ell},k} ^D}{2D \Delta_{\tilde{\ell}}}\Big\}\\
    &\ge\min\Big\{\frac14,\frac{t\;  (\log n)^{2}}{2D} n^{D^{1-k}-1}(\log\log n)^{3D}\Big\}\,,
  \end{align*}	
  where the second inequality uses $p_{i,k}\ge p_{\tilde\ell,k}$ since
  $i\ge\tilde\ell$ and the third uses $\tilde\ell,k\ge2$ and \Cref{lem:blockmodel}\ref{lem:blockmodel:p} to substitute
  $p_{\tilde\ell,k}=n^{-D^{-1}+D^{-\tilde\ell}+D^{-k}}(\log^{2/D} n)(\log\log n)^3$,
  and $\Delta_{\tilde\ell}=n^{D^{1-\tilde\ell}}$. This together
  with~\eqref{eq:chebyshev} gives~\eqref{eq:prob:special} as required.
\end{proof}

We will now use \Cref{lem:prob} to show that $\Gamma\sim\Gamma(n,D)$ a.a.s.~has
the following pseudo-randomness property, which along with the bound on the
number of edges from \Cref{lem:blockmodel}\ref{lem:blockmodel:Gamma} is all
that we shall need of $\Gamma(n,D)$ to establish universality: For every
well-behaved multiset~$\cB$ in~$\Gamma$, in every subblock $W_{k,j}$ we have
many vertices~$u$ such that~$u$ is in the common neighbourhood of some member
of~$\cB$.

\begin{lemma}\label{lem:detprop}
  Given $D\ge 2$, the random block model $\Gamma(n,D)$ with vertex set
  $W=W_{1}\dcup\ldots\dcup W_{N}$ and sub-blocks $W_{k,1}\dcup\dots\dcup
  W_{k,\log n}=W_k$ as in \Cref{construction} a.a.s.\ satisfies the following.
  For every $1\le t\le n$, for every well-behaved multiset $\mathcal{B}$ of $t$
  subsets of $W$, for every $1\le k\le N$ and every $1\le j\le\log n$, we have
 \[\Big|\big\{u\in W_{k,j}\,:\,\exists B\in\mathcal{B}\,\text{with}\,u\in N(B)\big\}\Big|\ge\min\Big\{\frac{1}{16},\frac{t}{4}n^{D^{1-k}-1} (\log n)^2(\log\log n)^D\Big\}\,|W_{k,j}|.\]
\end{lemma}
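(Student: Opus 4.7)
\emph{Plan.} Apply a Chernoff concentration bound to the count $X$ of ``good'' vertices for a fixed well-behaved multiset $\cB$, then union-bound over all well-behaved $\cB$. The crude number of such $\cB$ is $e^{\Omega(Dn\log n)}$, which no per-$\cB$ Chernoff bound could beat for large $|\cB|$; the key trick is a monotonicity reduction to multisets of bounded size.

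\emph{Reduction and trivial case.} For $k=1$, \Cref{lem:blockmodel}\ref{lem:blockmodel:p} gives $p_{1,k'}=1$ for every $k'$, so every vertex of $W_{1,j}$ is adjacent in $\Gamma$ to every other vertex of $W$, and the claim is immediate. Assume $k\ge 2$; write $p_0:=n^{D^{1-k}-1}(\log n)^2(\log\log n)^D$ and $t_0:=\lceil 1/(4p_0)\rceil$. The quantity bounded in the lemma's conclusion is monotone under adding sets to $\cB$, and well-behavedness (conditions~\ref{eq:Nice1}--\ref{eq:Nice3}) passes to sub-multisets. Since any $\cB$ with $|\cB|\ge t_0$ has target exactly $|W_{k,j}|/16$, the bound for $|\cB|>t_0$ follows deterministically from the bound for any size-$t_0$ sub-multiset $\cB_0\subseteq\cB$. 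Hence it suffices to verify the claim for all well-behaved multisets of size at most $t_0$.

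\emph{Chernoff for fixed $\cB$ of size $t\le t_0$.} For $u\in W_{k,j}\setminus\bigcup\cB$ set $Y_u:=\mathbf{1}[\exists B\in\cB:\,u\in N(B)]$ and $X:=\sum_u Y_u$. The variable $Y_u$ depends only on the edges of $\Gamma$ between $u$ and $\bigcup\cB$; these edge sets are disjoint for distinct $u$, so the $Y_u$ are mutually independent. By \Cref{lem:prob}, $\E[Y_u]\ge\min\{1/4,tp_0\}$, and~\ref{eq:Nice3} gives $|W_{k,j}\setminus\bigcup\cB|\ge|W_{k,j}|/2$; hence $\tfrac12\E[X]\ge\min\{1/16,(t/4)p_0\}|W_{k,j}|$, which matches the target lower bound. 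A short calculation using $|W_{k,j}|\ge 50\cdot 3^D n^{1-D^{-k}}/\log n$ and $n^{D^{1-k}-D^{-k}}\ge 3$ (which follows from the definition of $N$ together with $k,D\ge 2$) shows $\E[X]\ge 25\cdot 3^D\,t(\log n)(\log\log n)^D$ in both sub-regimes $t\le 1/(4p_0)$ and $1/(4p_0)<t\le t_0$. Applying \Cref{thm:Chernoff} (whose proof immediately extends to sums of independent Bernoullis) with $\delta=1/2$ yields $\Pr[X<\tfrac12\E[X]]\le 2e^{-\E[X]/12}$.

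\emph{Union bound and main obstacle.} The number of well-behaved multisets of size exactly $t$ is at most $|W|^{Dt}\le e^{2Dt\log n}$, so the per-$(t,k,j)$ failure probability is at most
\[2\exp\!\Big(t\log n\cdot\big(2D-\tfrac{25}{12}\cdot 3^D(\log\log n)^D\big)\Big)\;\le\;n^{-t}\]
for $n$ large, since $3^D(\log\log n)^D\to\infty$. Summing geometrically over $t\le t_0$ and over the $N\log n\le 2\log^2 n$ choices of $(k,j)$ gives total failure probability $o(1)$. The heart of the argument lies in the balance of these two exponentials: without the sub-multiset reduction the $e^{O(Dt\log n)}$ blow-up in multisets overwhelms any Chernoff bound, while with it in place the $(\log\log n)^D$ polylogarithmic factor baked into $p_0$ is precisely the slack needed to make the union bound close.
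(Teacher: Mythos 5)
Your proof is correct and follows essentially the same route as the paper's: handle $k=1$ trivially, reduce to bounded multiset size via a sub-multiset argument (you cut at $t_0=\lceil 1/(4p_0)\rceil$, the paper at $t<1/(2p_0)$, a cosmetic difference), apply Chernoff to the binomial count of good vertices using the per-vertex lower bound from \Cref{lem:prob} together with~\ref{eq:Nice3} to guarantee at least $\tfrac12|W_{k,j}|$ summands, and then union-bound over $\cB$, $k$, $j$, and $t$, with the $(\log\log n)^D$ factor supplying exactly the slack needed to beat the $e^{O(Dt\log n)}$ count of multisets. The only slips are trivial (e.g.\ $2n^{-t}$ is not literally $\le n^{-t}$, but the exponent has room to spare), and your remark about extending \Cref{thm:Chernoff} beyond binomials is unnecessary since the $Y_u$ are in fact i.i.d.\ (the edge distribution from $u\in W_{k,j}\setminus\bigcup\cB$ to $\bigcup\cB$ depends only on block indices), so $X$ is binomial.
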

\begin{proof}
  The assertion obviously is true for $k=1$, for all choices of $t$, $\cB$ and
  $j$, since vertices in $W_{1,j}$ have full degree and hence the number of
  $u\in W_{1,j}$ with $u\in N(B)$ for some~$B$ is
  $|W_{1,j}|\ge\frac1{16}|W_{1,j}|$. Therefore, we may assume $k\ge2$ in the following.

  We next want to argue that we can assume that $t$ is sufficiently small so that 
  \begin{equation}\label{eq:smallt}
    \frac{t}{4}n^{D^{1-k}-1} (\log n)^2(\log\log n)^D<\frac{1}{8}\,.
  \end{equation}
  More precisely, we argue that if we can show that in this case
  a.a.s.\ over all choices of $k\ge 2$, $j$, and $t$ satisfying~\eqref{eq:smallt} and over all choices of well-behaved $\mathcal{B}$ we have the
  bound
  \begin{equation}\label{eq:speccase}
    \big|\big\{u\in W_{k,j}\,:\,\exists B\in\mathcal{B}\,\text{with}\,u\in N(B)\big\}\big|\ge \frac{t}{4}n^{D^{1-k}-1}(\log n)^2(\log\log n)^D|W_{k,j}|\,,
  \end{equation}
  then this implies the lemma. Indeed, suppose that
  $\frac{t}{4}n^{D^{1-k}-1}(\log n)^2(\log\log n)^D\ge \frac{1}{8}$. Then we choose
  an integer $t'\le t$ such that
  \[\frac{1}{8}\ge \frac{t'}{4}n^{D^{1-k}-1}(\log n)^2(\log\log n)^D\ge\frac{1}{16}\,,\]
  which exists since $\tfrac14n^{D^{1-k}-1}(\log^2 n)(\log\log n)^D$ tends to
  zero as $n\to\infty,$ by our assumption $k\ge 2$.  Given $k$, $j$ and a
  well-behaved multiset $\mathcal{B}$ of $t$ subsets of $W$, we define a
  multiset $\mathcal{B}'$ by taking some $t'$ sets from $\mathcal{B}$. Trivially
  $\mathcal{B}'$ is well-behaved, so since~\eqref{eq:speccase} holds, $\{w\in
  W_{k,j}\,:\,\exists B'\in\mathcal{B}'\,,\, w\in N(B)\}$ has size at
  least \[\frac{t'}{4}n^{D^{1-k}-1}(\log n)^2(\log\log n)^D|W_{k,j}|\ge
  \frac1{16}|W_{k,j|}\] by choice of $t'$, and this set is a subset of the
  desired one. Hence, we may assume~\eqref{eq:smallt} from now on.
  
  Now fix $t\in [n]$ satisfying~\eqref{eq:smallt}, fix $2\le k \le N$,
  $j\in[\log n]$, and a well-behaved multiset $\mathcal{B}$ of $t$ subsets of
  $W$.  Since $\cB$ is well-behaved, it follows from~\ref{eq:Nice3} that there
  is a subset $U$ of $W_{k,j}$ of size $\tfrac12|W_{k,j}|$ which is disjoint
  from $\bigcup\mathcal{B}$. Fix such a set~$U$.  For each $w\in U,$ let $Y_{w}$
  denote the indicator random variable for the event that there exists $B\in\mathcal{B}$ with $w
  \in N(B)$. Observe that the variables~$Y_w$ are identically distributed and
  independent as $w$ ranges over $U$ and that~$Y_w$ is one with probability
  at least $t n^{D^{1-k}-1}(\log n)^{2}(\log\log n)^D$ by \Cref{lem:prob}
  and condition~\eqref{eq:smallt} on $t$.
  
  Now, the left-hand side of~\eqref{eq:speccase} is at least
  $Y=Y(k,j,\cB):=\sum_{{w\in U}} Y_{w}$.
  Using $|U|=\frac12|W_{k,j}|$, $|W_{k,j}|\ge\frac{1}{2\log n}|W_k|$, and $|W_k|=100\cdot 3^Dn^{1-D^{-k}}$,
  we conclude that
  \begin{equation}\begin{split}
    \label{eq:expnbs}
    \E Y &\ge \tfrac12|W_{k,j}|\cdot t n^{D^{1-k}-1}(\log n)^2(\log\log n)^D \\
    &\ge \tfrac1{4\log n}\cdot 100\cdot 3^D n^{1-D^{-k}}\cdot t n^{D^{1-k}-1}(\log n)^2(\log\log n)^D \\
    &> 2tn^{D^{1-k}-D^{-k}}(\log n)(\log\log n)^D
    \ge 2t(\log n)(\log\log n)^D\,.
  \end{split}\end{equation}
  Now, if~\eqref{eq:speccase} fails to hold for our fixed choice of $2\le k \le
  N$, $j\in[\log n]$, and well-behaved multiset $\mathcal{B}$ of size $t,$ then
  $Y < \E\; Y/2$, which occurs with probability at most
  \[2 \exp\big(-\tfrac{1}{12}\cdot 2t(\log n)(\log\log n)^D\big)=2\, n^{-\tfrac16 t (\log\log n)^D}\,,\]
  by \Cref{thm:Chernoff}, with $\delta=\tfrac12$.

  For our fixed $t$, we now take a union bound over the choices of
  $\mathcal{B}$, $k$, and $j$. Observe that $\mathcal{B}$ is given by a list of
  $tD$ vertices of $\Gamma(n,D)$ (together with null symbols to fill up sets of
  size smaller than $D$). Since $\Gamma(n,D)$ has at most $200\cdot 3^Dn$
  vertices by \Cref{lem:blockmodel}\ref{lem:blockmodel:Gamma}, we conclude that
  the number of choices of $\mathcal{B}$ is at most $\big(200\cdot
  3^Dn+1\big)^{tD}\le n^{2tD}$ for large~$n$. By
  \Cref{lem:blockmodel}\ref{lem:blockmodel:N} there are at most $2\log\log n$
  choices for~$k$, and by the definition of the sub-blocks, there are $\log n$
  choices for~$j$.  So the probability that there are $k,j$, and well-behaved
  $\cB$ of size $t$ for which~\eqref{eq:speccase} fails to hold is at most
  \[2 n^{2tD}(\log n)(2\log\log n) \cdot n^{-\tfrac16 t (\log\log n)^D} <n^{-2}\,,\]
  where the inequality holds for all sufficiently large $n$ since $(\log\log
  n)^D$ tends to infinity.

  Finally, we also take a union bound over the at most $n$ choices of $t$ to
  complete the proof that~\eqref{eq:speccase} holds a.a.s.~over all choices of
  $k\ge 2$, $j$, $t$ satisfying~\eqref{eq:smallt} and well-behaved $\mathcal{B}$
  of size~$t$ as desired.
\end{proof}

Our proof of \Cref{thm:Construction} now follows a strategy of Nenadov~\cite{rajko-thesis} which proceeds as follows.
We take
$\Gamma\sim\Gamma(n,D)$ satisfying the good property of
\Cref{lem:blockmodel}\ref{lem:blockmodel:Gamma} and the pseudorandomness
property of \Cref{lem:detprop}. We then fix a $D$-degenerate graph~$H$ we want
to embed. For this we use our embedding strategy, and we show inductively that
back-neighbourhoods are well-behaved, and that thus we can use the
pseudorandomness property to conclude that we will never fill up any sub-block
too much, and that this in turn implies that we can embed the next vertex in
some suitable sub-block.

\begin{proof}[Proof of \Cref{thm:Construction}]
  Let $D$ be fixed, let $n$ be large enough and let $\Gamma\sim \Gamma(n,D)$ be such that 
  $\Gamma$ has
  \begin{equation}\label{eq:edges}
    e(\Gamma)\le 80000n^{2-1/D}(\log n)^{2/D}(2\log\log n)^5
  \end{equation}
  edges, and such that for every $1\le t\le n$, every well-behaved multiset $\mathcal{B}$ of $t$ subsets of $W$, for every $1\le k\le N$ and every $1\le j\le\log n$, we have
  \begin{multline}\label{eq:pseudo}
    \big|\big\{u\in W_{k,j}\,:\,\exists B\in\mathcal{B}\,\text{with}\,u\in N(B)\big\}\big| \\
    \ge\min\Big\{\frac{1}{16},\frac{t}{4}n^{D^{1-k}-1} (\log n)^2(\log\log n)^D\Big\}\,|W_{k,j}|\,.\qquad
  \end{multline}
  By \Cref{lem:blockmodel}\ref{lem:blockmodel:Gamma} and \Cref{lem:detprop}
  the properties~\eqref{eq:edges} and~\eqref{eq:pseudo} occur a.a.s.  We will
  show that these imply that $\Gamma$ is universal for $D$-degenerate graphs on
  $n$ vertices.
 
  Recall from \Cref{construction} that, for every $1\le k \le N,$ the block
  $W_k$ is of size $100\cdot 3^Dn^{1-D^{-k}}$ and is partitioned into sub-blocks
  $W_{k,j}$ of sizes
  \begin{equation}\label{eq:sub-blocks}
    |W_{k,1}|=\frac12|W_{k}|=50\cdot 3^Dn^{1-D^{-k}}
    \quad\text{and}\quad
    |W_{k,j}|=\frac1{2\log n}|W_{k}|=\frac{50}{\log n}\cdot 3^Dn^{1-D^{-k}}\,
  \end{equation}
  for $2\le j\le \log n$, respectively, and that $\Delta_k = n^{D^{1-k}}$.  Fix a $D$-degenerate graph
  $H$ on $n$ vertices, suppose its vertices are $x_1,\dots,x_n$ in a
  $D$-degeneracy order, and run the embedding strategy as given
  in \Cref{strategy}. Let $\psi_i$ be the partial embedding of
  $\{x_1,\ldots,x_i\}$.

  We next recursively define numbers $L_{k,j}$ for $1\le k\le N$ and $1\le j\le\log n$ as follows:
  \[L_{k,j}=
  \begin{cases}
    \frac{2nD}{\Delta_{k+1}}=2n^{1-D^{-k}} \quad&\text{if $1\le k\le N-1$ and $j=1$} \\
    n & \text{if $k=N$ and $j=1$ \,} \\
    \frac1{4\log n} L_{k,j-1} & \text{if $1\le k\le N$ and $j>1$ \,.}
  \end{cases}
  \]
  We shall show that $L_{k,j}$ is an upper bound on the number of vertices our embedding strategy uses in~$W_{k,j}$.
  Before turning to this, observe that
  \begin{equation}\label{eq:Lbound}
    L_{k,j}+1\le\tfrac1{16}|W_{k,j}| \qquad\text{and}\qquad L_{k,\log n}<\log n
  \end{equation}
  for each $k,j$. Indeed, the second inequality holds with lots of room to spare:
  \[
   L_{k,\log n}=\frac{1}{(4\log n)^{\log n}}L_{k,1}\le\frac{n}{(4\log n)^{\log n}}<1\,,
  \]
  where the final inequality is since $4\log n>e$ and $e^{\log n}=n$.
  To see the first inequality in~\eqref{eq:Lbound}, note that we have
  $L_{N,1}=n$ and $|W_{N,1}|\ge\frac12 \cdot 100n$ by
  \Cref{lem:blockmodel}\ref{lem:blockmodel:W}, and for $k<N$ we have
  $L_{k,1}=2n^{1-D^{-k}}$ and $|W_{k,1}|=50\cdot 3^Dn^{1-D^{-k}}$
  by~\eqref{eq:sub-blocks}. Similarly, for $j\ge 2$ we have
  $L_{N,j}\le\frac{n}{4\log n}$ and $|W_{N,j}|\ge\frac1{2\log n} \cdot 100n$ by
  \Cref{lem:blockmodel}\ref{lem:blockmodel:W}, and for $k<N$ we have
  $L_{k,j}\le \frac{1}{4\log n} 2n^{1-D^{-k}}$ and $|W_{k,j}|=\frac{50}{\log n}
  \cdot 3^Dn^{1-D^{-k}}$ by~\eqref{eq:sub-blocks}.
  
  Now, for any step $1\le i\le n$ in our embedding strategy, consider the property 
  \begin{align*}
    &P(i) \,: \quad \text{$\psi_{i}$ exists and }  |W_{k,j}\cap\im\psi_i|\le L_{k,j} \text{ for all $1\le k\le N$ and all $1\le j\le \log n$}\,. 
  \end{align*}
  The property $P(n)$ implies that $\Gamma$ contains $H$ as a subgraph,
  finishing the proof of our theorem. We shall prove that $P(i)$ holds for all $1\le
  i\le n$ inductively.  Consider $i=1$ first. Since $x_{1}$ has no
  back-neighbours it can be embedded arbitrarily in $W_{k,1},$ where $k$ is
  determined by $\deg_H(x_1).$ Then $|W_{k',j'}\cap\im\psi_i|\in\{0,1\}$ for all
  $(k',j'),$ and thus $P(1)$ holds trivially.

  Let now $i>1$ and assume that $P(i-1)$ holds. Let $k$ be minimal such that
  $\deg_H(x_i)\le \Delta_k.$ We will first show that it is possible to embed
  $x_i$ into $W_{k,\log n}$, and thus the embedding of~$x_i$ succeeds, and then
  that $P(i)$ holds inductively.
  Indeed, let $y_1,\ldots,y_{\ell}$ be the at most~$D$ neighbours
  of $x_i$ in $\{x_1,\ldots, x_{i-1}\}$ that are already embedded in $\Gamma$ by
  $\psi_{i-1}$, and let $B=\{\psi_{i-1}(y_1),\ldots,\psi_{i-1}(y_\ell)\}.$ Note
  that clearly $\{B\}$ is well-behaved. 
  Thus, by~\eqref{eq:pseudo},
  the number of vertices~$u$ in $W_{k,\log n}\cap N(B)$ is at least
  \begin{equation*}\begin{split}
  \frac{1}{4}n^{D^{1-k}-1} (\log n)^2 & (\log\log n)^D|W_{k,j}| \\
  & \eqByRef{eq:sub-blocks} \frac{1}{4}n^{D^{1-k}-1} (\log n)^2(\log\log n)^D\frac{50}{\log n}\cdot 3^Dn^{1-D^{-k}} \\
  & =\frac{25}{2}\cdot 3^D\log n(\log\log n)^D n^{D^{-k}(D-1)}\ge 2\log n\,.
  \end{split}\end{equation*}
  Using $P(i-1)$ and~\eqref{eq:Lbound}, at most $L_{k,\log n}<\log n$ of these
  are in the image of $\psi_{i-1}$, so that we can choose an image for $x_i$. In
  particular, there exists a minimal $j$ such that $\big(W_{k,j}\cap N(B)\big)
  \sm \im \psi_{i-1}\neq \emptyset$ and therefore the embedding strategy
  succeeds at step $i$. For the following argument, we fix this $j$.

  To finish the induction step, assume for a contradiction that $P(i)$ fails to
  hold. Since only $|W_{k,j}\cap\im\psi_i|$ changes in step~$i$, this implies
  that 
  \begin{alignat}{2}\label{eq:indstep1}
    L_{k,j}< |W_{k,j}\cap\im\psi_i|&\le L_{k,j}+1\,, \quad&&\text{and} \\
    \label{eq:indstep2}
    |W_{k',j'}\cap\im\psi_i|&\le L_{k',j'} &&\text{for all $(k',j')\neq (k,j)$}\,.
  \end{alignat}
  First assume that $j=1$.  If
  also $k=N$, then the fact that $\im\psi_i$ has size $i\le n=L_{1,N}$
  immediately contradicts~\eqref{eq:indstep1}. If $k<N$ on
  the other hand, then recall that all vertices in $\psi_i^{-1}(W_k)$ have
  degree at least $\Delta_{k+1}$ by our embedding strategy; by
  \Cref{degree-observation}, there are at most $2Dn/\Delta_{k+1} = L_{k,1}$
  such vertices in $H$, again contradicting~\eqref{eq:indstep1}.

  Hence, it remains to consider the case $j\ge 2$.
  We construct $\mathcal{B}$ as follows: For each $x\in V(H)$ with $\psi_i(x)\in
  W_{k,j}$, we add the set $B_x=\psi_i\big(N^-_H(x)\big)$ to
  $\mathcal{B}$. Observe that this is a multiset since some vertices of $H$ may
  have identical back-neighbourhoods, and $|\cB|>L_{k,j}$
  by~\eqref{eq:indstep1}. We claim that $\mathcal{B}$ is well-behaved. Indeed,
  $|B|\le D$ for all $B\in\cB$ since we embed vertices of $H$ in the
  $D$-degeneracy order. Next, we verify~\ref{eq:Nice2}: Given $u\in W_{k'}$ for
  some $1\le k'\le N$, the number of sets $B$ of $\mathcal{B}$ containing $u$ is
  zero if $u\not\in\im\psi_i$. If $u\in\im\psi_i$, then $u\in B_x$ only if
  $\psi_i^{-1}(u)$ is a neighbour of $x$ in $H$. Since $u\in W_{k'}$, the degree
  of $\psi_i^{-1}(u)$ is at most $\Delta_{k'}$ and hence there are at most
  $\Delta_{k'}$ choices of $x$ such that $u\in B_x$, giving~\ref{eq:Nice2}.
  Finally, for verifying~\ref{eq:Nice3}, note that since
  $\bigcup\mathcal{B}\subset\im\psi_i$, the number of vertices of
  $\bigcup\mathcal{B}$ in any given $W_{k',j'}$ is at most $L_{k',j'}+1$ (with
  equality only for $k,j$) by~\eqref{eq:indstep1} and~\eqref{eq:indstep2} and
  $L_{k',j'}+1<\tfrac1{16}|W_{k',j'}|$ by~\eqref{eq:Lbound}. This finishes the
  check that $\mathcal{B}$ is well-behaved.
 
  Hence, it follows from~\eqref{eq:pseudo} that 
  \begin{multline}\label{eq:filled}
    \big|\big\{u\in W_{k,j-1}\,:\,\exists B\in\mathcal{B}\,\text{with}\,u\in N(B)\big\}\big| \\
    \ge\min\Big\{\frac{1}{16},\frac{L_{k,j}}{4}n^{D^{1-k}-1} (\log n)^2(\log\log n)^D\Big\}\,|W_{k,j-1}|\,.
    \qquad
  \end{multline}
  Moreover, we have
  \begin{multline*}
    \frac{L_{k,j}}{4}n^{D^{1-k}-1} (\log n)^2(\log\log n)^D|W_{k,j-1}| \geByRef{eq:sub-blocks}
    L_{k,j}3^Dn^{D^{-k}(D-1)} \log n(\log\log n)^D \\
    =\frac{L_{k,j-1}}{4\log n}3^Dn^{D^{-k}(D-1)} \log n(\log\log n)^D
    > L_{k,j-1} +1\,,
  \end{multline*}
  where the equality uses the definition of $L_{k,j}$. Combining this
  with~\eqref{eq:Lbound}, we obtain that the right hand side
  of~\eqref{eq:filled} is strictly larger than $L_{k,j-1}$, which is an upper
  bound for $|W_{k,j-1}\cap\im\psi_i|$. It follows that there is some $x$ with
  $\psi_i(x)\in W_{k,j}$ such that $N(B_x)$ contains a vertex of $W_{k,j-1}$
  outside $\im\psi_i$. But this is a contradiction: we could have embedded $x$
  to $W_{k,j-1}$ and therefore would not have embedded it to $W_{k,j}$. This
  proves $P(i),$ and thus, by induction, $P(n)$ as desired.
\end{proof}

\section{Concluding remarks} 

In this paper, we initiated the study of $\cH$-universal graphs, when $\cH=\cH(n,D)$ is the class of all $n$-vertex $D$-degenerate graphs. We determined that the minimum number of edges of $\cH(n,D)$-universal graphs is $O(n^{2-1/D}(\log^{2/D} n)(\log\log n)^5)$.

The counting argument shows that any $\cH(n,D)$-universal graph has at least $\Omega(n^{2-1/D})$ edges, and it remains open whether a polylogarithmic factor is needed. While in our proof, the $(\log n)^{2/D}$-factor is needed, the $(\log\log n)^5$-factor may be shaved off with our proof strategy, albeit the proof becoming more technical. We do believe, however, that no polylog-factor should be necessary.

\begin{conjecture}
	The minimum number of edges of an $\cH(n,D)$-universal graph is $\Theta(n^{2-1/D}).$
\end{conjecture}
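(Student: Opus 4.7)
The plan is to take an instance $\Gamma\sim\Gamma(n,D)$ that simultaneously satisfies the edge bound of \Cref{lem:blockmodel}\ref{lem:blockmodel:Gamma} and the pseudo-randomness property of \Cref{lem:detprop}; both hold a.a.s., so such $\Gamma$ exists. It then suffices to show that any such $\Gamma$ contains every $D$-degenerate graph $H$ on $n$ vertices as a subgraph. Fix a $D$-degeneracy order $x_1,\dots,x_n$ of $V(H)$, run the embedding strategy from \Cref{strategy} to obtain partial embeddings $\psi_0,\psi_1,\dots,\psi_n$, and prove by induction on $i$ that $\psi_i$ exists and that no sub-block is overfilled: $|W_{k,j}\cap\im\psi_i|\le L_{k,j}$ for all $k,j$, where $L_{k,j}$ is a carefully chosen capacity.

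For the capacities, I would set $L_{k,1}$ to match the trivial upper bound coming from \Cref{degree-observation}: since vertices embedded into $W_k$ all have degree at least $\Delta_{k+1}$, there can be at most $2Dn/\Delta_{k+1}$ of them in $H$ when $k<N$, and at most $n$ of them when $k=N$; then shrink geometrically, $L_{k,j}=L_{k,j-1}/(4\log n)$, so that $L_{k,\log n}<1$ and, comparing with the sub-block sizes from \Cref{construction}, $L_{k,j}+1\le \tfrac{1}{16}|W_{k,j}|$.

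The inductive step has two parts. For embedding $x_i$, let $B$ be the image under $\psi_{i-1}$ of the already-embedded back-neighbours of $x_i$ and let $k$ be the index with $\Delta_{k+1}<\deg_H(x_i)\le\Delta_k$. The singleton $\{B\}$ is trivially well-behaved, so applying \eqref{eq:pseudo} to it and to the sub-block $W_{k,\log n}$ shows that at least $2\log n$ vertices of $W_{k,\log n}$ lie in $N(B)$, while by the inductive hypothesis and \eqref{eq:Lbound} at most $L_{k,\log n}<\log n$ of them are already used; hence the embedding strategy succeeds at step $i$, placing $x_i$ in some $W_{k,j}$ with $j$ chosen minimally.

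The remaining and most delicate step is to show that $L_{k,j}$ is not violated. Suppose for contradiction that it is; the $j=1$ cases follow directly from \Cref{degree-observation} and the choice of $L_{k,1}$. For $j\ge 2$, I would form the multiset $\mathcal{B}=\{B_x:\psi_i(x)\in W_{k,j}\}$ with $B_x=\psi_i(N_H^-(x))$, and verify that it is well-behaved: \ref{eq:Nice1} comes from the $D$-degeneracy order, \ref{eq:Nice2} from the fact that a vertex placed into $W_{k'}$ is used as a back-neighbour only by successors of degree at most $\Delta_{k'}$, and \ref{eq:Nice3} from the inductive capacity bound combined with $L_{k',j'}+1<\tfrac{1}{16}|W_{k',j'}|$. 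Applying \eqref{eq:pseudo} to $\mathcal{B}$ and the sub-block $W_{k,j-1}$ then yields, after substituting the sub-block size and the recursion $L_{k,j}=L_{k,j-1}/(4\log n)$, that the number of vertices of $W_{k,j-1}$ in $N(B)$ for some $B\in\mathcal{B}$ strictly exceeds $L_{k,j-1}\ge |W_{k,j-1}\cap\im\psi_i|$; hence some $x$ with $\psi_i(x)\in W_{k,j}$ in fact had an available image in $W_{k,j-1}$, contradicting the minimality of $j$ in the embedding strategy. The main obstacle is calibrating the capacities $L_{k,j}$ so that on the one hand $\mathcal{B}$ is well-behaved (forcing $L_{k,j}\ll |W_{k,j}|$) and on the other hand the pseudo-randomness estimate on $W_{k,j-1}$ strictly exceeds $L_{k,j-1}$; this is exactly what dictates the geometric factor $1/(4\log n)$ and is compatible with the $\log n$ gain provided by \Cref{lem:detprop}.
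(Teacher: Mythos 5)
The statement you were asked to prove is the \emph{conjecture} at the end of the paper, which asserts that the minimum number of edges of an $\cH(n,D)$-universal graph is $\Theta(n^{2-1/D})$ \emph{with no polylogarithmic factor}. The paper does not prove this; it explicitly says ``it remains open whether a polylogarithmic factor is needed.'' What your proposal reproduces, quite faithfully, is the paper's proof of \Cref{thm:Construction}: you fix an instance of the random block model $\Gamma(n,D)$, invoke \Cref{lem:blockmodel}\ref{lem:blockmodel:Gamma} and \Cref{lem:detprop}, choose capacities $L_{k,j}$ with $L_{k,1}=2Dn/\Delta_{k+1}$ (or $n$ for $k=N$) and $L_{k,j}=L_{k,j-1}/(4\log n)$, and run the same two-part induction (the embedding step via the singleton $\{B\}$, the capacity step via the multiset $\{B_x\}$). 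That argument is correct and is essentially the paper's, but the model $\Gamma(n,D)$ is built with $p_{i,k}=\min\{n^{-D^{-1}+D^{-i}+D^{-k}}(\log^{2/D}n)(\log\log n)^3,1\}$, and \Cref{lem:blockmodel}\ref{lem:blockmodel:Gamma} accordingly gives $e(\Gamma)=O\big(n^{2-1/D}(\log^{2/D}n)(\log\log n)^5\big)$ edges. That is an upper bound with a polylogarithmic factor, not $O(n^{2-1/D})$.

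So the gap is not in the induction --- it is that your chosen construction cannot possibly give the conjectured bound, because the polylog factors are hard-wired into $p_{i,k}$ and hence into $e(\Gamma)$. Tracing where they arise: the $(\log\log n)^3$ comes from the $(2\log\log n)^D$ loss for restricting to a most common intersection pattern in \Cref{lem:prob} and the $(\log\log n)^D$ margin consumed in the union bound, while the $\log^{2/D}n$ factor is what makes the recursion $L_{k,j}=L_{k,j-1}/(4\log n)$ cost-free across $\log n$ sub-blocks (each pseudo-randomness application must beat the $1/(4\log n)$ shrinkage, see the inequality following \eqref{eq:filled}). To prove the conjecture one would have to redesign the construction and/or the embedding bookkeeping so that these losses vanish; the paper acknowledges the $(\log\log n)^5$ could plausibly be removed with more care, but offers no route to removing the $\log^{2/D}n$. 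Your proposal does not attempt any such improvement, so it establishes \Cref{thm:Construction}, not the conjecture. (You also implicitly rely on \Cref{thm:lowDens} for the matching lower bound, which is fine, but the bottleneck is the upper bound.)
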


In this paper, we focused on minimising the number of edges of universal graphs. One may additionally ask for the minimum number of vertices of such a sparse universal graph. In~\cite{ackrrs2000}, one of the main motivations for considering the random graph $ G(n,p)$ was that it provides sparse graphs on $(1+\varepsilon)n$ vertices that are $\cH_{\Delta}(n)$-universal. The number of vertices of our constructed $\Gamma$ is between $100n$ and $200\cdot3^D n$, see \Cref{lem:blockmodel}. We believe that a similar construction, with a more careful analysis of the embedding scheme, will provide an $\cH(n,D)$-universal graph on $(1+\eps)n$ vertices with a similar number of edges. Roughly, one would need to choose $N$ slightly smaller such that the union $W_1\dcup\dots \dcup W_{N-1}$ has size about $\tfrac12\eps n$, set the size of $W_N$ to be $(1+\tfrac12\eps)n$, and adjust the probabilities $p(i,N)$ and the sizes of the subblocks of $W_N$. We see no reason why this should cause genuine difficulty (it does make for a rather more intricate optimisation problem), but did not check the details.

Finally, it would be interesting to determine the minimum number of edges an $n$-vertex $\cH(n,D)$-universal graph can have. We suspect that one could get an upper bound $\tilde{O}\big(n^{2-1/2D}\big)$ which Ferber and Nenadov~\cite{fn2018} proved for spanning $\cH_\Delta(n,D)$-universality by using something like the random block model, setting aside a large independent set of vertices of degree at most $2D$ in the embedding and finishing off with a matching argument to embed these (much as in~\cite{fn2018}). However, since we cannot ask for these set-aside vertices to be widely separated, making this argument work is likely to be harder. For this problem, it would already be interesting to improve on~\cite{fn2018} and show $o\big(n^{2-1/2D}\big)$ edges can suffice for spanning $\cH_\Delta(n,D)$-universality.

\section*{Acknowlegement} The authors would like to thank the organisers and sponsors of the Second Armenian Workshop on Graphs, Combinatorics, Probability, where we started this project, for their hospitality.

\bibliographystyle{plain}
\bibliography{references}

\begin{thebibliography}{10}

\bibitem{a-personal}
Noga Alon.
\newblock Personal communication, 2019 Random Structures \& Algorithms
  Conference.

\bibitem{aa2002}
Noga Alon and Vera Asodi.
\newblock Sparse universal graphs.
\newblock {\em Journal of Computational and Applied Mathematics}, 142(1):1--11,
  2002.

\bibitem{ac2007}
Noga Alon and Michael Capalbo.
\newblock Sparse universal graphs for bounded-degree graphs.
\newblock {\em Random Structures \& Algorithms}, 31(2):123--133, 2007.

\bibitem{ac2008}
Noga Alon and Michael Capalbo.
\newblock Optimal universal graphs with deterministic embedding.
\newblock In {\em SODA}, pages 373--378. Citeseer, 2008.

\bibitem{ackrrs2001}
Noga Alon, Michael Capalbo, Yoshiharu Kohayakawa, Vojt{\v{e}}ch R{\"o}dl,
  Andrzej Ruci{\'n}ski, and Endre Szemer{\'e}di.
\newblock Near-optimum universal graphs for graphs with bounded degrees.
\newblock In {\em RANDOM--APPROX}, pages 170--180. Springer, 2001.

\bibitem{ackrrs2000}
Noga Alon, Michael Capalbo, Yoshiharu Kohayakawa, Vojt\v{e}ch R{\"o}dl, Andrzej
  Ruci{\'n}ski, and Endre Szemer{\'e}di.
\newblock Universality and tolerance.
\newblock In {\em Proceedings 41st Annual Symposium on Foundations of Computer
  Science}, pages 14--21. IEEE, 2000.

\bibitem{bcegs1982}
Laszlo Babai, Fan~RK Chung, P{\'a}l Erd{\"o}s, Ronald~L Graham, and Joel~H
  Spencer.
\newblock On graphs which contain all sparse graphs.
\newblock In {\em North-Holland Mathematics Studies}, volume~60, pages 21--26.
  Elsevier, 1982.

\bibitem{bclr1986}
Sandeep Bhatt, Fan Chung, Tom Leighton, and Arnold Rosenberg.
\newblock Optimal simulations of tree machines.
\newblock In {\em 27th Annual Symposium on Foundations of Computer Science
  (sfcs 1986)}, pages 274--282. IEEE, 1986.

\bibitem{bclr1989}
Sandeep~N Bhatt, Fan~R.K. Chung, Frank~Thomson Leighton, and Arnold~L
  Rosenberg.
\newblock Universal graphs for bounded-degree trees and planar graphs.
\newblock {\em SIAM Journal on Discrete Mathematics}, 2(2):145--155, 1989.

\bibitem{bl1982}
Sandeep~N Bhatt and Charles~E Leiserson.
\newblock How to assemble tree machines.
\newblock In {\em Proceedings of the fourteenth annual ACM symposium on Theory
  of computing}, pages 77--84, 1982.

\bibitem{bds2021}
Matija Buci{\'c}, Nemanja Dragani{\'c}, and Benny Sudakov.
\newblock Universal and unavoidable graphs.
\newblock {\em Combinatorics, Probability and Computing}, 30(6):942--955, 2021.

\bibitem{c2002}
Michael Capalbo.
\newblock Small universal graphs for bounded-degree planar graphs.
\newblock {\em Combinatorica}, 22(3):345--359, 2002.

\bibitem{ck1999}
Michael~R Capalbo and S~Rao Kosaraju.
\newblock Small universal graphs.
\newblock In {\em Proceedings of the thirty-first annual ACM symposium on
  Theory of computing}, pages 741--749, 1999.

\bibitem{cgp1976}
FRK Chang, Ron~L Graham, and Nicholas Pippenger.
\newblock On graphs which contain all small trees, ii.
\newblock 1976.

\bibitem{cg1978}
Fan~RK Chung and Ronald~L Graham.
\newblock On graphs which contain all small trees.
\newblock {\em Journal of Combinatorial Theory, Series B}, 24(1):14--23, 1978.

\bibitem{cg1983}
Fan~RK Chung and Ronald~L Graham.
\newblock On universal graphs for spanning trees.
\newblock {\em Journal of the London Mathematical Society}, 2(2):203--211,
  1983.

\bibitem{crs1983}
F.R.K. Chung, A.L. Rosenberg, and Lawrence Snyder.
\newblock Perfect storage representations for families of data structures.
\newblock {\em SIAM Journal on Algebraic Discrete Methods}, 4(4):548--565,
  1983.

\bibitem{cfns2017}
David Conlon, Asaf Ferber, Rajko Nenadov, and Nemanja {\v{S}}kori{\'c}.
\newblock Almost-spanning universality in random graphs.
\newblock {\em Random Structures \& Algorithms}, 50(3):380--393, 2017.

\bibitem{fkl2019}
Asaf Ferber, Gal Kronenberg, and Kyle Luh.
\newblock Optimal threshold for a random graph to be 2-universal.
\newblock {\em Transactions of the American mathematical Society},
  372(6):4239--4262, 2019.

\bibitem{fn2018}
Asaf Ferber and Rajko Nenadov.
\newblock Spanning universality in random graphs.
\newblock {\em Random Structures \& Algorithms}, 53(4):604--637, 2018.

\bibitem{fp1987}
Joel Friedman and Nicholas Pippenger.
\newblock Expanding graphs contain all small trees.
\newblock {\em Combinatorica}, 7:71--76, 1987.

\bibitem{JLR}
Svante Janson, Tomasz {\L}uczak, and Andrzej Ruci\'nski.
\newblock {\em Random graphs}, volume~45.
\newblock John Wiley \& Sons, 2011.

\bibitem{jkv2008}
Anders Johansson, Jeff Kahn, and Van Vu.
\newblock Factors in random graphs.
\newblock {\em Random Structures \& Algorithms}, 33(1):1--28, 2008.

\bibitem{rajko-thesis}
Rajko Nenadov.
\newblock {\em Ramsey and universality properties of random graphs}.
\newblock PhD thesis, ETH Zurich, 2016.

\bibitem{rado1964}
Richard Rado.
\newblock Universal graphs and universal functions.
\newblock {\em Acta Arithmetica}, 4(9):331--340, 1964.

\bibitem{rss1980}
Arnold~L Rosenberg, Larry~J Stockmeyer, and Lawrence Snyder.
\newblock Uniform data encodings.
\newblock {\em Theoretical Computer Science}, 11(2):145--165, 1980.

\bibitem{valiant1981}
Leslie~G Valiant.
\newblock Universality considerations in {VLSI} circuits.
\newblock {\em IEEE Transactions on Computers}, 100(2):135--140, 1981.

\end{thebibliography}

\end{document}